\documentclass[]{article}
\usepackage[dvipsnames]{xcolor}

\usepackage[numbers,sort&compress]{natbib}

\usepackage[lmargin=3.5cm, rmargin=3.5cm]{geometry}

\usepackage[utf8]{inputenc}
\usepackage{lmodern}
\usepackage{mathtools} \usepackage{amssymb}

\DeclareMathOperator{\cl}{cl}
\DeclareMathOperator{\conv}{conv}
\DeclareMathOperator{\cone}{cone}

\DeclareMathOperator{\interior}{int}
\DeclareMathOperator{\vertices}{vert}
\DeclareMathOperator{\ext}{ext}
\DeclareMathOperator{\argmin}{argmin}
\DeclareMathOperator{\argmax}{argmax}

\usepackage{amsthm}
\newtheorem{theorem}{Theorem}[section]
\newtheorem{proposition}[theorem]{Proposition}
\newtheorem{lemma}[theorem]{Lemma}
\newtheorem{corollary}[theorem]{Corollary}
\theoremstyle{definition}
\newtheorem{definition}[theorem]{Definition}
\newtheorem{remark}[theorem]{Remark}
\newtheorem{example}{Example}[section]

\newcommand{\recc}[1]{0^+ {#1}} 
\newcommand{\polar}[1]{{#1}^{\circ}} 
\newcommand{\R}{\mathbb{R}}
\newcommand{\T}{^{\mathsf{T}}}
\newcommand{\set}[1]{\left\lbrace {#1} \right\rbrace}
\newcommand{\norm}[1]{\left\lVert {#1} \right\rVert}
\newcommand{\haus}[2]{d_{\mathsf{H}}\left({#1},{#2}\right)} 
\newcommand{\thaus}[2]{\bar{d}_{\mathsf{H}}(#1,#2)} 
\newcommand{\exc}[2]{e[{#1},{#2}]} 
\newcommand{\ed}{\left(\varepsilon , \delta\right)}
\newcommand{\mgeq}{\succcurlyeq} 
\newcommand{\mg}{\succ} 
\newcommand{\A}{\mathcal{A}}
\newcommand{\Ab}{\bar{\A}}
\newcommand{\decoRule}{\rule{.8\textwidth}{.4pt}}

\newcommand{\ols}[1]{\mskip.5\thinmuskip\overline{\mskip-.5\thinmuskip
    {#1} \mskip-.5\thinmuskip}\mskip.5\thinmuskip} 

\usepackage{hyperref}
\usepackage{color}
\usepackage{enumerate}
\usepackage{subcaption}
\usepackage{booktabs}
\usepackage{makecell}
\usepackage{diagbox}
\usepackage[misc]{ifsym}
\usepackage[linesnumbered,algoruled,vlined]{algorithm2e}
\usepackage{ifthen}
\newcommand{\isassigned}{\leftarrow}

\begin{document}

\title{On the Approximation of Unbounded Convex Sets by Polyhedra}

\author{Daniel Dörfler\thanks{daniel.doerfler@uni-jena.de}\\Friedrich Schiller University Jena, Germany}

\maketitle

\begin{abstract}
  This article is concerned with the approximation of unbounded convex
  sets by polyhedra. While there is an abundance of literature
  investigating this task for compact sets, results on the unbounded
  case are scarce. We first point out the connections
  between existing results before introducing a new notion of
  polyhedral approximation called $\ed$-approximation that integrates
  the unbounded case in a meaningful way. Some basic results about
  $\ed$-approximations are proven for general convex sets. In the last
  section an algorithm for the computation of $\ed$-approximations of
  spectrahedra is presented. Correctness and finiteness of the
  algorithm are proven.
\end{abstract}

\section{Introduction}
The problem of approximating a convex set $C$ by a polyhedron in the
Hausdorff distance has been studied systematically for at least a
century \cite{Min03} and has a variety of applications in mathematical
programming. These include algorithms for convex optimization problems
that approximate the feasible region by a sequence of polyhedra
\cite{CG59, Kel60} or solution concepts for convex vector optimization
problems \cite{RW05, ESS11, LRU14}. Moreover, there are multiple
algorithms for mixed-integer convex optimzation problems that are
based on polyhedral outer approximations, see \cite{DG86, WP95,
KLW16}. Interest in this problem is fueled by the fact that polyhedra
have a simple structure in the sense that they can be described by
finitely many points and directions. This finite structure makes
computations with polyhedra more viable than with general convex
sets. Hence, it is desirable to work with polyhedra that approximate
some complicated set well. If the set $C$ to be approximated is
assumed to be compact, then numerous theoretical results are
available. This includes asymptotic \cite{FT48, Sch81} and explicit
\cite{BI75} bounds on the number of vertices that a polyhedron needs
to have in order to approximate $C$ to within a prescribed
accuracy. Moreover, iterative algorithms, so called augmenting and
cutting schemes, for the polyhedral approximation of convex bodies are
known, see \cite{Kam92}. Some convergence properties are discussed in
\cite{Kam93} and \cite{Kam94}. An overview about combinatorial aspects
of the approximation of convex bodies is collected in the survey
article by Bronshte\u{\i}n \cite{Bro07}.

If boundedness of the set $C$ is not assumed, the amount of literature
on the problem is scarce, although there are various applications
where unbounded convex sets arise naturally. In convex vector
optimization, for example, it is known that the so-called extended
feasible image or upper image contains the set of nondominated points
on its boundary, see e.g. \cite{Loe11, ESS11}. Due to its geometric
properties it is advantageous to work with this unbounded set instead
of with the feasible image itself. Another application is in large
deviations theory, which, generally speaking, is the study of the
asymptotic behaviour of tails of sequences of probability
distributions, see \cite{Var84}. Under certain conditions bounds for
this behaviour can be obtained in terms of rate functions of
probability distributions. In \cite{NR95} such bounds are obtained
under the condition that the level sets of a specific convex rate
function can be approximated by polyhedra. Moreover, the authors of
\cite{LYBV18} generalize the algorithm in \cite{DG86} and consider
mixed-integer convex optimization problems, whose feasible region is
not necessarily bounded. The problems are solved by computing
polyhedral outer approximations of the feasible region in such a
fashion that reaching a globally optimal solution is guaranteed.

The most notable result about the polyhedral approximation of $C$ is
due to Ney and Robinson \cite{NR95} who give a characterization of the
class of sets that can be approximated arbitrarily well by polyhedra
in the Hausdorff distance. However, this class is relatively small as
restrictive assumptions on the recession cone of $C$ have to be made,
such as polyhedrality. The reason boils down to the fact that the
Hausdorff distance is seldom suitable to measure the similarity
between unbounded sets. In fact, the Hausdorff distance between closed
and convex sets is finite only if the recession cones of the sets are
equal, see Proposition \ref{PHDRC} in Section \ref{SEC_3}. Due to this
difficulty, additional assumptions about the structure of the problem
have to be made in each of the aforementioned applications. These
include polyhedrality of the ordering cone or boundedness of the
problem in convex vector optimization, see e.g. \cite{LRU14, DLSW21},
polyhedrality of a cone generated by the rate function in large
deviations theory, or strong duality when dealing with convex
optimization problems. In 2018, Ulus \cite{Ulu18} characterized the
tractability of convex vector optimization problems in terms of
polyhedral approximations. One important necessary condition is the
so-called self-boundedness of the problem.

Considering the facts mentioned, polyhedral approximation of unbounded
convex sets requires a notion that does not solely rely on the
Hausdorff distance. To this end our main contribution is the
introduction of the notion of $\ed$-approximation for closed convex
sets $C$ that do not contain lines. One feature of
$\ed$-approximations is that the recession cones of the involved sets
play an important role. We show that $\ed$-approximations define a
meaningful notion of polyhedral approximation in the sense that a
sequence of approximations converges to the set $C$ as $\varepsilon$
and $\delta$ diminish. This convergence is achieved in the sense of
Painlevé-Kuratowski, who define convergence for sequences of sets, see
\cite{RW98}. Moreover, we present an algorithm for the computation of
$\ed$-approximations when the set $C$ is a spectrahedron, i.e. defined
by a linear matrix inequality. We also prove correctness and
finiteness of the algorithm. Its main purpose, however, is to show
that $\ed$-approximations can be constructed in finitely many steps
theoretically.

This article is organized as follows. In the next section we introduce
the necessary notation and provide definitions. In Section \ref{SEC_3}
we compare the results by Ney and Robinson \cite{NR95} with the
results by Ulus \cite{Ulu18} and put them in relation. In particular,
we show that self-boundedness is a special case of the property that
the excess of a set over its own recession cone is finite. The concept
of $\ed$-approximations is introduced in Section \ref{SEC_4}. We prove
a bound on the Hausdorff distance between truncations of an
$\ed$-approximation and truncations of $C$. The main result is Theorem
\ref{TPKC}. It states that a sequence of $\ed$-approximations of $C$
converges to $C$ in the sense of Painlevé-Kuratowski as $\varepsilon$
and $\delta$ tend to zero. In the last section we present the
aforementioned algorithm and prove correctness and finiteness as well
as illustrate it with two examples.

\section{Preliminaries}\label{SEC_2}
Throughout this article we denote by $\cl C$, $\interior C$, $\recc
C$, $\conv C$, and $\cone C$ the closure, interior, recession cone,
convex hull and conical hull of a set $C$, respectively. A compact
convex set with nonempty interior is called a \textit{convex
body}. The Euclidean ball with radius $r$ centered at a point~${c \in
\R^n}$ is denoted by~$B_r(c)$. A point $c$ of a convex set $C$ is
called an \emph{extreme point} of $C$, if $C \setminus\{c\}$ is
convex. The set of extreme points of $C$ is denoted by $\ext
C$. Extreme points are exactly the points of $C$ that cannot be
written as a proper convex combination of elements of $C$
\cite[p.~162]{Roc70}. For finite sets $V, D \subseteq \R^n$, the set
\begin{equation}\label{vrep}
P = \conv V + \cone D
\end{equation}
is called a \emph{polyhedron}. The plus sign denotes Minkowski
addition. The sets $V,D$ in \eqref{vrep} are called a
\emph{$V$-representation} of $P$ as it is expressed in terms of its
vertices and directions. A polyhedron can equivalently be expressed as
a finite intersection of closed halfspaces \cite[Theorem 19.1]{Roc70},
i.e.
\begin{equation}\label{hrep}
P = \{x \in \R^n \mid Ax \leq b\}
\end{equation}
for a matrix $A \in \R^{m \times n}$ and a vector $b \in \R^m$. The
data $(A,b)$ are called a \emph{$H$-representation} of $P$.  The
extreme points of a polyhedron $P$ are called \emph{vertices} of $P$
and are denoted by~$\vertices P$. For symmetric matrices
$A_0,A_1,\dots,A_n$ of arbitrary fixed size we define
\begin{equation}
\Ab(x) := \sum_{i=1}^n x_iA_i \quad \text{and} \quad \A(x) := \Ab(x) +
A_0,
\end{equation}
i.e. a linear combination of the $A_i$ and a translation of $\Ab(x)$
by $A_0$, respectively. We denote by $A \mg 0$ ($A \mgeq 0$) positive
(semi-)definiteness of the symmetric matrix $A$. A set of the form
$\{x \in \R^n \mid \A(x) \mgeq 0\}$ is called a
\emph{spectrahedron}. Spectrahedra are a generalization of polyhedra
for which many geometric properties of polyhedra generalize nicely,
e.g. the recession cone of a spectrahedron $C$ is obtained as $\{x \in
\R^n \mid \Ab(x) \mgeq 0 \}$, see \cite{GR95}, whereas the recession
cone of a polyhedron in $H$-representation is $\{x \in \R^n \mid Ax
\leq 0\}$. Given a cone $K$ the set $\polar{K} = \{y \in \R^n \mid
\forall x \in K: x\T y \leq 0\}$ is called the \emph{polar cone} of
$K$. The polar $\polar{(\recc{C})}$ of the recession cone of a
spectrahedron $C$ is computed as
\begin{equation}\label{EPCS}
\polar{(\recc{C})} = \cl \left\lbrace \left( -A_1 \cdot X, \dots, -A_n
\cdot X \right)\T \mid X \mgeq 0 \right\rbrace, 
\end{equation}
where $A_i \cdot X$ means the trace of the matrix product $A_iX$, see
\cite[Section 3]{GR95}. A cone $K$ is called \emph{polyhedral} if ${K
= \cone D}$ for some finite set $D$ and \emph{pointed} if ${K \cap
(-K) = \{0\}}$. A set whose recession cone is pointed is called
\emph{line-free}. It is well known, that a closed convex set contains
an extreme point if and only if it is line-free, see \cite[Corollary
18.5.3]{Roc70}. Given nonempty sets ${C_1, C_2 \subseteq \R^n}$ the
\emph{excess of $C_1$ over $C_2$}, ${\exc{C_1}{C_2}}$, is defined as
\begin{equation}
\exc{C_1}{C_2} = \adjustlimits \sup_{c_1 \in C_1} \inf_{c_2 \in C_2}
\norm{c_1-c_2},
\end{equation}
where~$\norm{\cdot}$ denotes the Euclidean norm. The \emph{Hausdorff
distance between $C_1$ and $C_2$}, ${\haus{C_1}{C_2}}$, is then
expressed as
\begin{equation}
\haus{C_1}{C_2} = \max \{\exc{C_1}{C_2}, \exc{C_2}{C_1}\}.
\end{equation}
It is well known, that the Hausdorff distance defines a metric on the
space of nonempty compact subsets of $\R^n$. Between unbounded sets
the Hausdorff distance may be infinite. A polyhedron $P$ is called an
\textit{$\varepsilon$-approximation} of a convex set $C$ if
$\haus{P}{C} \leq \varepsilon$.

\section{Polyhedral Approximation in the Hausdorff Distance}\label{SEC_3}
Every convex body can be approximated arbitrarily well by a polytope
in the Hausdorff distance, see e.g. \cite{Sch81}. Moreover, algorithms
for the computation of $\varepsilon$-approximations exist for which
the convergence rate is known \cite{Kam92}. For the approximation of
unbounded convex sets only the following theorem is known, which
provides a characterization of the sets that can be approximated by
polyhedra in the Hausdorff distance.

\begin{theorem}[see {\cite[Theorem 2.1]{NR95}}]\label{TNR}
Let $C \subseteq \R^n$ be nonempty and closed. Then the following are
equivalent:
\begin{enumerate}[(i)]
\item $C$ is convex, $\recc{C}$ is polyhedral, and
  $\exc{C}{\recc{C}} < +\infty$.
\item \label{TNR_ii} There exists a polyhedral cone $K$ such that for every
  $\varepsilon > 0$ there exists a finite set $V \subseteq \R^n$
  such that~${\haus{\conv V + K}{C} \leq \varepsilon}$.
\end{enumerate}
Further, if (\ref{TNR_ii}) holds then $K = \recc{C}$.
\end{theorem}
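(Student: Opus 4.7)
The plan is to handle the two directions of the biconditional separately.

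\textbf{Plan for (ii) $\Rightarrow$ (i):} This is the easier direction and I would dispatch it first. Let $P_n = \conv V_n + K$ with $\haus{P_n}{C} \leq \varepsilon_n \downarrow 0$. Convexity of $C$ is immediate because a Hausdorff limit of convex sets is convex. I would then prove $K = \recc C$ by two scaling arguments. For $K \subseteq \recc C$, fix $k \in K$ and $c_0 \in C$; pick $p_n = v_n + k_n \in P_n$ within $\varepsilon_n$ of $c_0$; since $k_n + \lambda k \in K$, the shifted point $p_n + \lambda k$ still lies in $P_n$ and is therefore within $\varepsilon_n$ of some $c_n \in C$, yielding $c_0 + \lambda k \in \cl C = C$ in the limit. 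For $\recc C \subseteq K$, fix a single $n$, approximate $c_0 + \lambda k$ by $v_\lambda + k_\lambda \in P_n$, divide by $\lambda$ and let $\lambda \to \infty$; the bounded contributions of $v_\lambda$, $c_0$ and the $\varepsilon_n$-error all vanish, forcing $k_\lambda / \lambda \to k$, which then lies in the closed cone $K$. Polyhedrality of $\recc C$ is inherited from $K$, and a single bound $\norm{c - v - k} \leq \varepsilon$ gives $\norm{c - k} \leq \varepsilon + \max_{w \in V} \norm{w} < \infty$, so $\exc{C}{\recc C} < \infty$. The final clause $K = \recc C$ of the theorem is recorded by the very same argument.

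\textbf{Plan for (i) $\Rightarrow$ (ii):} Set $K = \recc C$ and $r = \exc{C}{K} < \infty$. Given $\varepsilon > 0$, the construction of $V$ would proceed in three steps. First, after translating so that $0 \in C$, fix a sufficiently large $R$ and let $C_R = C \cap B_R(0)$, a convex body. Second, apply the classical polytope approximation of convex bodies to obtain a finite $V \subseteq C_R$ with $\haus{\conv V}{C_R} \leq \varepsilon/2$. Third, set $P = \conv V + K$. Since $V \subseteq C$ and $C + K = C$, we have $P \subseteq C$ and hence $\exc{P}{C} = 0$. For $\exc{C}{P}$, points $c \in C$ with $\norm{c} \leq R$ are handled directly by the polytope approximation of $C_R$, since $\conv V \subseteq P$. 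The nontrivial case is $\norm{c} > R$, which I would treat via the following key lemma, expected to be the main technical content: for every $\delta > 0$ there exists $R$ such that $C \subseteq (C \cap B_R(0)) + K + B_\delta(0)$. Granted this lemma, each such $c$ can be written $c = v' + k' + e$ with $v' \in C \cap B_R$, $k' \in K$, $\norm{e} \leq \delta$; replacing $v'$ by a point of $\conv V$ within $\varepsilon/2$ yields $p \in P$ with $\norm{c - p} \leq \varepsilon/2 + \delta \leq \varepsilon$ for $\delta$ small enough.

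The \textbf{main obstacle} is establishing the key lemma. A naive radial retraction of $c$ toward the origin preserves the transverse part $u$ in the decomposition $c = k + u$, $k \in K$, $\norm{u} \leq r$, and therefore does not shrink with $\norm{c}$. Instead, one must exploit the polyhedral structure of $K$: for $c = k + u$, I would pick $v \in C \cap B_R$ whose $K$-component is approximately aligned with $k/\norm{k}$, so that $c - v$ lies close to $K$ with error that tends to zero as $R$ grows. Both hypotheses of (i) are used essentially here --- finiteness of $\exc{C}{K}$ confines $C$ to a tube of radius $r$ around $K$, while polyhedrality of $K$ guarantees that only finitely many asymptotic directions need to be resolved along $\partial C_R$, making the required alignment of $V$ achievable with a finite vertex set.
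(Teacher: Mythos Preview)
The paper does not contain a proof of this theorem: it is quoted verbatim from \cite[Theorem~2.1]{NR95} and used as a black box. There is therefore no ``paper's own proof'' against which to compare your proposal.

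That said, a brief assessment of your plan on its merits. Your (ii)$\Rightarrow$(i) argument is essentially complete and correct; the two scaling arguments for $K=\recc C$ and the uniform bound $\varepsilon_n+\max_{w\in V_n}\norm{w}$ for the excess are exactly what is needed. For (i)$\Rightarrow$(ii), your overall architecture (inner approximation $V\subseteq C$, reduce to a compact truncation, invoke polytope approximation of convex bodies) is sound, and you have correctly isolated the crux as the lemma $C\subseteq (C\cap B_R(0))+K+B_\delta(0)$. Two cautions. First, the theorem does not assume $C$ is line-free, so $K$ may contain a line; your ``convex body $C_R$'' step and the alignment heuristic tacitly assume pointedness. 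You should first factor out the lineality space $L=K\cap(-K)$ and work in $C/L$, or argue directly that the non-pointed case reduces to the pointed one. Second, your sketch of the key lemma (``pick $v\in C\cap B_R$ whose $K$-component is approximately aligned with $k/\norm{k}$'') is still only heuristic: you have not explained why such a $v$ exists in $C$, as opposed to merely in $K+B_r(0)$. Polyhedrality of $K$ is indeed the right lever here---one route is to show, via the finite $H$-representation $K=\{x:a_i^{\mathsf T}x\le 0\}$, that $C$ sits inside a polyhedron $Q=\{x:a_i^{\mathsf T}x\le r\norm{a_i}\}$ with $\recc Q=K$, and then use the finite vertex set of $Q$ (in the pointed case) to manufacture the required $v$'s---but this still needs to be written out.
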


A related result is found in \cite{Ulu18}, where the approximate
solvability of convex vector optimization problems in terms of
polyhedral approximations is investigated. In order to state the
result and establish the relationship to Theorem \ref{TNR} we need the
following definition from \cite{Ulu18}.

\begin{definition}
  A set $C \subsetneq \R^n$,  with a nontrivial recession cone is called
  \emph{self-bounded}, if there exists $y \in \R^n$ such
  that~${\{y\}+\recc{C} \supseteq C}$.
\end{definition}

Adjusted to our notation, the mentioned result can
be stated as:

\begin{proposition}[{see \citep[Proposition 3.7]{Ulu18}}]
  Let $C \subseteq \R^n$ be closed and convex. If $C$ is self-bounded,
  then there exists a finite set $V \subseteq \R^n$ such
  that~${\haus{\conv V + \recc{C}}{C} \leq \varepsilon}$.
\end{proposition}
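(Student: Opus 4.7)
The plan is to reduce the problem to the compact case by truncating $C$ and applying classical polyhedral approximation, using self-boundedness to control the tail outside the truncation. Set $K := \recc{C}$. After translating by $-y$, we may assume the witness of self-boundedness is $0$, so that $C \subseteq K$ and $C + K = C$. For $R > 0$ consider the compact convex set $C_R := \{c \in C : \|c\| \leq R\}$. By the classical approximation of compact convex sets by polytopes in the Hausdorff distance, there exists a finite $V \subseteq C_R$ with $\haus{\conv V}{C_R} \leq \varepsilon/2$. Since $V \subseteq C$ and $C + K = C$, the inclusion $\conv V + K \subseteq C$ holds, so $\exc{\conv V + K}{C} = 0$.

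It remains to bound $\exc{C}{\conv V + K} \leq \varepsilon$. For $c \in C_R$, $\operatorname{dist}(c, \conv V) \leq \varepsilon/2$ and hence $\operatorname{dist}(c, \conv V + K) \leq \varepsilon/2$. For $c \in C \setminus C_R$, the bound reduces to the key claim that
\[
\sup_{c \in C}\operatorname{dist}\!\bigl(c,\, C_R + K\bigr) \longrightarrow 0 \quad \text{as} \quad R \to \infty.
\]
Granting the claim, fix $R$ so this supremum is at most $\varepsilon/2$. For any $c \in C$ pick $c' \in C_R$ and $k \in K$ with $\|c - c' - k\| \leq \varepsilon/2$, and then $p \in \conv V$ with $\|c' - p\| \leq \varepsilon/2$; the triangle inequality yields $p + k \in \conv V + K$ with $\|c - (p + k)\| \leq \varepsilon$.

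The heart of the argument is the key claim, which is where I expect the main obstacle. The intuition from self-boundedness is that every $c \in C$ lies in the cone $K$, so the direction $\hat r := c/\|c\|$ lies in $K \cap S^{n-1}$ and is an asymptotic direction of $C$. For any fixed $c_0 \in C$ the recession property yields $c_0 + t\hat r \in C$ for all $t \geq 0$, giving the candidate $c' := c_0 + (R - \|c_0\|)\hat r \in C_R$ for $R \geq \|c_0\|$. A direct computation with $k := (\|c\| - R + \|c_0\|)\hat r \in K$ gives $\|c - c' - k\| = \|c_0\|$, a fixed error independent of $R$. The obstacle is to refine this construction: rather than using a ray through a fixed interior $c_0$, one chooses $c'$ on the relative boundary of $C$ at distance roughly $R$ from the origin in the direction $\hat r$. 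The quantitative control needed here comes from the fact that, by self-boundedness, far from the origin the set $C$ is asymptotically concentrated near translates of rays in $K$; a compactness argument on $K \cap S^{n-1}$, combined with this asymptotic behaviour, then yields the uniform bound required by the claim.
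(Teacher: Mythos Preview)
The paper does not supply its own proof of this proposition; it is quoted verbatim from \cite[Proposition~3.7]{Ulu18} and used only as a point of comparison with Theorem~\ref{TNR}. So there is no ``paper's proof'' to compare against here.

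That said, your proposal is genuinely incomplete, and you already flag the gap yourself. The entire difficulty of the proposition is concentrated in your ``key claim''
\[
\sup_{c \in C}\operatorname{dist}\bigl(c,\, C_R + K\bigr) \longrightarrow 0,
\]
and you do not prove it. Worse, this claim is essentially a reformulation of the very statement you are trying to establish: if the proposition holds with $V\subseteq C$, then choosing $R$ large enough that $V\subseteq C_R$ gives $\conv V+K\subseteq C_R+K$ and hence your claim; conversely your claim plus a polytope approximation of $C_R$ gives the proposition with $V\subseteq C$. So the reduction to the compact truncation buys nothing --- you have pushed all the content into the claim and then waved at ``a compactness argument on $K\cap S^{n-1}$'' without supplying one. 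Your own computation shows the obvious ray construction gives only the fixed error $\|c_0\|$, not something tending to zero, and you do not explain how to ``refine'' it.

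A more direct route, in the spirit of what the paper develops around this proposition, is to exploit the sandwich $\{c_0\}+K\subseteq C\subseteq\{y\}+K$ (for any $c_0\in C$) to get $\haus{C}{\{c_0\}+K}\le\|c_0-y\|<\infty$ immediately, and then refine by adding finitely many further points of $C$ to $V$. The finiteness of $\exc{C}{\recc C}$ that Theorem~\ref{TESB} extracts from self-boundedness is the right starting point; the remaining approximation step still needs an argument, but at least one is not re-proving the proposition under another name.
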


If $\recc{C}$ is polyhedral, then, clearly, $C$ can be approximated by
a polyhedron. The difference to Theorem \ref{TNR} is the
self-boundedness of $C$ instead of the finiteness
of~${\exc{C}{\recc{C}}}$. The following theorem points out the
connection between these conditions and shows, that under an
additional assumption, both coincide. The relationships are
illustrated in Figure \ref{fig_exc_slfbnd}.

\begin{theorem}\label{TESB}
Given a nonempty, closed and convex set $C \subseteq \R^n$, consider
the statements
\begin{enumerate}[(i)]
\item\label{TESB_i} $\exc{C}{\recc{C}} < +\infty$,
\item\label{TESB_ii} $C$ is self-bounded,
\item\label{TESB_iii} There is a compact set $K \subseteq \R^n$ such
  that~${K + \recc{C} \supseteq C}$.
\end{enumerate}
Then the following implications are true: (\ref{TESB_i})
$\Leftrightarrow$ (\ref{TESB_iii}) and (\ref{TESB_ii}) $\Rightarrow$
(\ref{TESB_iii}). If, additionally, $\recc{C}$ is solid, then
(\ref{TESB_i}) -- (\ref{TESB_iii}) are equivalent.
\end{theorem}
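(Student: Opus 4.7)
The plan is to establish the two easy directions first, then tackle the nontrivial implication requiring solidity.

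For (ii) $\Rightarrow$ (iii), I would simply take $K = \{y\}$, where $y$ is the witness of self-boundedness. The singleton is trivially compact. For (iii) $\Rightarrow$ (i), suppose $K + \recc{C} \supseteq C$ with $K$ compact. For any $c \in C$ write $c = k + r$ with $k \in K$ and $r \in \recc{C}$. Then $\inf_{r' \in \recc{C}} \norm{c - r'} \leq \norm{c - r} = \norm{k} \leq \max_{k' \in K}\norm{k'}$, and the right-hand side is finite by compactness. Taking the supremum over $c \in C$ gives $\exc{C}{\recc{C}} < +\infty$.

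For (i) $\Rightarrow$ (iii), set $M := \exc{C}{\recc{C}}$. Since $C$ is closed and convex, $\recc{C}$ is closed, so for every $c \in C$ the infimum in the definition of the excess is attained at some $r_c \in \recc{C}$ with $\norm{c - r_c} \leq M$. Writing $c = r_c + (c - r_c)$ shows $C \subseteq \recc{C} + \cl B_M(0)$, so the closed ball $K = \cl B_M(0)$ works. Together with the previous paragraph this gives (i) $\Leftrightarrow$ (iii), and we already have (ii) $\Rightarrow$ (iii).

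The remaining and most interesting direction is (i) $\Rightarrow$ (ii) under the extra hypothesis that $\recc{C}$ is solid. Here the obstacle is promoting the compact translating set from part (iii) to a single point. My plan is to exploit the interior of the recession cone to absorb the whole compact set into a single translate. Pick $d \in \interior(\recc{C})$, so that $d + B_\rho(0) \subseteq \recc{C}$ for some $\rho > 0$. Since cones are closed under nonnegative scaling, $\lambda d + B_{\lambda \rho}(0) \subseteq \recc{C}$ for every $\lambda \geq 0$; choosing $\lambda = M/\rho$ yields $\lambda d + \cl B_M(0) \subseteq \recc{C}$. Now for any $c \in C$, write $c = r_c + v_c$ with $r_c \in \recc{C}$ and $\norm{v_c} \leq M$ as above. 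Then
\begin{equation*}
c + \lambda d = r_c + (\lambda d + v_c) \in \recc{C} + \recc{C} = \recc{C},
\end{equation*}
using that $\recc{C}$ is a convex cone. Hence $C \subseteq \{-\lambda d\} + \recc{C}$, so $y := -\lambda d$ witnesses self-boundedness. The key idea to isolate in the writeup is that solidity is exactly what lets a one-dimensional translation compensate for the full-dimensional spread $\cl B_M(0)$ produced by the finite excess.
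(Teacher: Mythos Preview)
Your proof is correct and follows essentially the same route as the paper's. In particular, for the key implication under solidity you both pick an interior direction $d$ of $\recc{C}$ with $B_\rho(d)\subseteq\recc{C}$, scale by $M/\rho$ to obtain $B_M\!\left(\tfrac{M}{\rho}d\right)\subseteq\recc{C}$, and combine this with $C\subseteq B_M(0)+\recc{C}$ to conclude $C\subseteq\{-\tfrac{M}{\rho}d\}+\recc{C}$; the only differences are notational.
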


\begin{proof}
  We start with the assertion \textit{(\ref{TESB_i})} $\Rightarrow$
  \textit{(\ref{TESB_iii})}. Let~${M = \exc{C}{\recc{C}}}$,~${c
    \in C}$ be arbitrary and~${r_c \in \recc{C}}$ such
  that~${\norm{c-r_c} = \inf_{r \in \recc{C}} \norm{c-r}}$. This
  infimum is uniquely attained, because~$\recc{C}$ is closed and
  convex. Then~${\norm{c-r_c} \leq M}$ and we conclude~${c =
    (c-r_c)+r_c \in B_M(0) + \recc{C}}$. Therefore,~${B_M(0) + \recc{C}
    \supseteq C}$.

  To show \textit{(\ref{TESB_iii})} $\Rightarrow$
  \textit{(\ref{TESB_i})}, let~${K + \recc{C} \supseteq C}$ for some
  compact set~$K$. Then we have
  \begin{equation*}
  \begin{aligned}
    \exc{C}{\recc{C}} & \leq \exc{K+\recc{C}}{\recc{C}} \\
    & = \adjustlimits \sup_{\substack{k \in K\\ r \in \recc{C}}}
    \inf_{\bar{r}
      \in \recc{C}} \norm{(k+r)-\bar{r}} \\
    & \leq \sup_{\substack{k \in K\\ r \in \recc{C}}} \norm{(k+r)-r}
    \\
    & = \sup_{k \in K} \norm{k} < +\infty.
  \end{aligned}
  \end{equation*}
  The implication \textit{(\ref{TESB_ii})} $\Rightarrow$
  \textit{(\ref{TESB_iii})} is trivial with~${K = \{y\}}$. For the last
  part we show \textit{(\ref{TESB_iii})} $\Rightarrow$
  \textit{(\ref{TESB_ii})}, assuming $\recc{C}$ is
  solid. If~${\exc{C}{\recc{C}} = 0}$, then~${C \subseteq \recc{C}}$ and
  we can set~${y = 0}$. Now, let~${\exc{C}{\recc{C}} = M > 0}$ and
  fix~${c \in \interior C}$. Then there exists~${\varepsilon > 0}$ such
  that~${B_{\varepsilon}(c) \subseteq \recc{C}}$. We have
  \begin{equation*}
    \begin{aligned}
      B_M\left(\frac{M}{\varepsilon} c\right) & =
      \frac{M}{\varepsilon} B_{\varepsilon}(c) \\
      & \subseteq \frac{M}{\varepsilon} \recc{C} \\
      & = \recc{C}.
    \end{aligned}
  \end{equation*}
  Therefore,~${\left\lbrace -\frac{M}{\varepsilon} c \right\rbrace +
    \recc{C} \supseteq B_M(0)}$ and, since~$\recc{C}$ is
  convex, $\left\lbrace -\frac{M}{\varepsilon} c \right\rbrace +
    \recc{C} \supseteq B_M(0) + \recc{C}$. From the first part of the
  proof we know, that~${B_M(0) + \recc{C} \supseteq C}$, which completes
  the proof.
\end{proof}

\begin{figure}
  \centering
  \includegraphics[scale=.9]{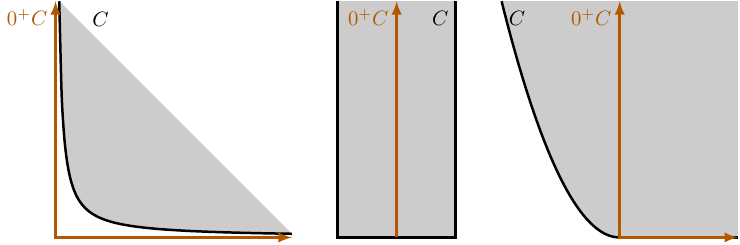}
  \decoRule
  \caption{\label{fig_exc_slfbnd} Illustration of Theorem
    \ref{TESB}. \textbf{Left:} The set $C$ is contained in its own
    recession cone. Therefore, it is self-bounded and
    $\exc{C}{\recc{C}}=0$. \textbf{Center:} The excess of $C$ over its
    recession cone is finite and attained in any of the two
    vertices. However, $C$ is not self-bounded, because it cannot be
    contained in a translation of $\recc{C}$. \textbf{Right:} A set that
    is neither self-bounded nor does it hold
    $\exc{C}{\recc{C}}<\infty$. Traversing the parabolic arc, the
    distance to $\recc{C}$ grows without bound.}
\end{figure}

\begin{example}
  To see that ${\exc{C}{\recc{C}} < +\infty}$ does not imply
  self-boundedness of $C$ unless $\recc{C}$ is solid, consider the
  following counterexample. In $\R^2$ let ${C = \conv \{\pm e_1\} +
    \cone \{e_2\}}$, where $e_i$ denotes the $i$-th unit
  vector. Then one has the equality $\exc{C}{\recc{C}}=1$, but $C$ is
  not self-bounded. The set is illustrated in the center of Figure
  \ref{fig_exc_slfbnd}.
\end{example}

In view of the above result, we suggest calling a set self-bounded, if it
satisfies Property~\textit{(\ref{TESB_iii})}. On one hand, this extends
the notion to sets whose recession cone is not solid. And on the other
hand, makes every compact set self-bounded, rather than just
singletons. Since in \cite{Ulu18} cones are assumed to be solid,
Theorem \ref{TESB} proves that a convex vector optimization problem is
tractable in terms of polyhedral approximations, if and only if the
upper image \cite[Equation 6]{Ulu18} of the problem satisfies
\textit{(i)} in Theorem \ref{TNR}.

The reason that many unbounded convex sets are beyond the scope of
polyhedral approximation in the Hausdorff distance is that it is by
nature designed to behave nicely only for compact sets. The following
proposition specifies this.

\begin{proposition}\label{PHDRC}
 For closed and convex sets $C_1, C_2 \subseteq \R^n$ it is true,
 that $\haus{C_1}{C_2} < +\infty$ only if~${\recc{C_1} =
   \recc{C_2}}$.
\end{proposition}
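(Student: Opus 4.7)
The plan is to fix $M := \haus{C_1}{C_2} < +\infty$ and show the symmetric inclusion $\recc{C_1} \subseteq \recc{C_2}$; swapping the roles of $C_1$ and $C_2$ then yields equality. I would assume both sets are nonempty, since otherwise a finite Hausdorff distance forces them both to be empty and the claim is vacuous.

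Fix $r \in \recc{C_1}$ and any $c_1 \in C_1$. Since $C_1$ is closed and convex, $c_1 + n r \in C_1$ for every $n \in \mathbb{N}$. The bound $\exc{C_1}{C_2} \leq M$ together with closedness and convexity of $C_2$ guarantees that the metric projection of each $c_1 + n r$ onto $C_2$ is attained, giving a point $c_2^n \in C_2$ with $\norm{(c_1 + n r) - c_2^n} \leq M$. Writing $c_2^n = c_1 + n r - e_n$ with $\norm{e_n} \leq M$, the identity $\tfrac{1}{n} c_2^n = \tfrac{1}{n}c_1 + r - \tfrac{1}{n} e_n$ shows $\tfrac{1}{n} c_2^n \to r$.

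The remaining step is the standard recession-cone trick. Fix any $c_2 \in C_2$ and any $\lambda \geq 0$. For all $n$ large enough that $\lambda / n \in [0,1]$, convexity of $C_2$ yields $\bigl(1 - \tfrac{\lambda}{n}\bigr) c_2 + \tfrac{\lambda}{n} c_2^n \in C_2$, and this convex combination converges to $c_2 + \lambda r$ as $n \to \infty$. Closedness of $C_2$ then gives $c_2 + \lambda r \in C_2$, which is exactly the defining property of $r \in \recc{C_2}$.

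The main obstacle is bookkeeping rather than depth: one must invoke closedness of $C_1$ (so that $c_1 + n r$ stays in $C_1$ for all $n$), convexity plus closedness of $C_2$ (for both existence of projections and the limit argument), and the symmetry of $\haus{\cdot}{\cdot}$. No delicate estimate is needed beyond the trivial convergence $\tfrac{1}{n}(c_1 + n r - e_n) \to r$, which is what makes the whole argument work.
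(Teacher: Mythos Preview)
Your argument is correct. Both you and the paper exploit the same geometric fact---a ray $c_1+\mu r$ with $r\in\recc{C_1}$ stays in $C_1$---but the technical routes diverge. The paper argues by contrapositive: assuming $r\in\recc{C_1}\setminus\recc{C_2}$, it invokes the identity $\recc{(C_2+B_\varepsilon(0))}=\recc{C_2}$ (citing \cite[Proposition 9.1.2]{Roc70}) to conclude that the ray eventually leaves every $\varepsilon$-neighbourhood of $C_2$, forcing $\haus{C_1}{C_2}\geq\varepsilon$ for all $\varepsilon$. You instead argue directly: from $\haus{C_1}{C_2}\leq M$ you extract points $c_2^n\in C_2$ within distance $M$ of $c_1+nr$, observe $\tfrac{1}{n}c_2^n\to r$, and then run the standard ``scaled convex combination'' trick to place $c_2+\lambda r$ in $C_2$ for every $c_2\in C_2$ and $\lambda\geq 0$. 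Your approach is slightly more self-contained (no external citation needed) and makes the role of closedness of $C_2$ explicit at the limit step; the paper's version is shorter but leans on a black-box result about recession cones of Minkowski sums. One cosmetic remark: the inclusion $c_1+nr\in C_1$ follows from the definition of $\recc{C_1}$ alone, so closedness of $C_1$ is not actually used at that step.
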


\begin{proof}
  Assume~${\recc{C_1} \neq \recc{C_2}}$ and let w.l.o.g.~${r \in
    \recc{C_1} \setminus \recc{C_2}}$. Consider the equivalent definition
  of the Hausdorff distance:
  \[ \haus{C_1}{C_2} = \inf \left\lbrace \varepsilon > 0 \mid C_1
      \subseteq C_2 + B_{\varepsilon}(0), C_2 \subseteq C_1 +
      B_{\varepsilon}(0) \right\rbrace. \] Let $\varepsilon$ be large enough
  such that~${C_1 \cap \left(C_2 + B_{\varepsilon}(0)\right) \neq
    \emptyset}$ and let $z$ be an element of this set. Then~${z +\mu r \in
    C_1}$ for all~${\mu \geq 0}$. The recession cone of~${C_2+B_e(0)}$
  is~$\recc{C_2}$ according to \cite[Proposition
  9.1.2]{Roc70}. Therefore, there exists some~$\mu_{\varepsilon}$ such
  that~${z+\mu r \notin C_2+B_e(0)}$ for all~${\mu \geq
    \mu_{\varepsilon}}$. This yields~${\haus{C_1}{C_2} \geq \varepsilon}$
  and the claim follows with~${\varepsilon \to \infty}$.
\end{proof}

\section{A Polyhedral Approximation Scheme for Closed Convex Line-Free
  Sets}\label{SEC_4}
We have seen that, in order to approximate a set $C$ by a polyhedron
$P$ in the Hausdorff distance, their recession cones need to be
identical. Theorems \ref{TNR} and \ref{TESB} tell us that this is
achievable only for specific sets $C$. To treat a larger class of
sets, a concept is needed that quantifies similarity between closed
convex cones, similar to how the Hausdorff distance quantifies
similarity between compact sets.

\begin{definition}
  Given nonempty closed convex cones~${K_1, K_2 \subseteq \R^n}$,
  the \emph{truncated Hausdorff distance between~$K_1$
    and~$K_2$},~${\thaus{K_1}{K_2}}$, is defined as
  \begin{equation}
    \thaus{K_1}{K_2} := \haus{K_1 \cap B_1(0)}{K_2 \cap B_1(0)}.
  \end{equation}
\end{definition}

Since every cone contains the origin, it is immediate
that~${\thaus{K_1}{K_2} \leq 1}$. The truncated Hausdorff distance
defines a metric on the set of closed convex cones in~$\R^n$, see
\cite{Gur65}. However, it is only one way among many to measure the
distance between convex cones. We suggest the survey in \cite{IS10}
for a more thorough discussion of the topic. With the truncated
Hausdorff distance we define the following notion of polyhedral
approximation of convex sets that are not necessarily bounded.

\begin{definition}\label{DEDA}
  Given a nonempty closed convex and line-free set~${C \subseteq
    \R^n}$, a line-free polyhedron~$P$ is called an
  \emph{${\ed}$-approximation of~$C$} if
  \begin{enumerate}[(i)]
  \item $\exc{\vertices P}{C} \leq \varepsilon$,
  \item $\thaus{\recc{P}}{\recc{C}} \leq \delta$,
  \item $P \supseteq C$.
  \end{enumerate}
\end{definition}

\begin{figure}
  \centering
  \includegraphics[]{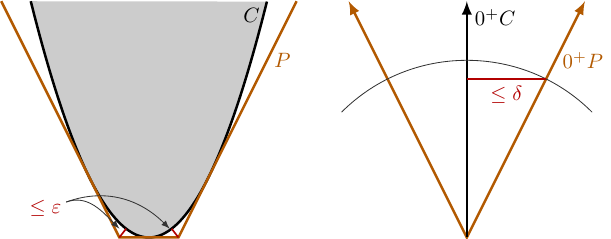}
  \decoRule
  \caption{\label{fig_ed_apprx}\textbf{Left:} The polyhedron $P$ is
    an~$\ed$-approximation of the grey set~$C$. \textbf{Right:} The
    recession cones of the sets on the left. The truncated Hausdorff
    distance between them is at most~$\delta$.}
\end{figure}

\begin{remark}
  The assumption that $P$ is line-free is equivalent to $\vertices P
  \neq \emptyset$ and hence required for condition (i) in the
  definition. Condition (iii) means that $P$ is an outer approximation
  of $C$. This is required, because otherwise the roles of $P$ and $C$
  would have to be interchanged in (i). However, it is not clear how
  to proceed with this in a meaningful fashion. The analogue of
  considering vertices of $P$ would be to consider extreme points of
  $C$ instead. The set of extreme points of $C$ may be unbounded and
  it is in general not possible to enforce the upper bound of
  $\varepsilon$. Lastly, we decided to make a distinction between
  $\varepsilon$ and $\delta$, because scales of these error measures
  may be very different depending on the sets, i.e. $\delta$ is always
  bounded from above by 1, but for $\varepsilon$ it may be useful to
  allow values larger than 1.
\end{remark}

Figure \ref{fig_ed_apprx} illustrates the definition. We will show
that an~$\ed$-approximation of a set~$C$ approximates~$C$ in a
meaningful way. To this end, we consider the Painlevé-Kuratowski
convergence, a notion of set convergence that is suitable for a
broader class of sets than convergence with respect to the Hausdorff
distance.

\begin{definition}
  A sequence~$\{C^{\nu}\}_{\nu \in \mathbb{N}}$ of subsets of~$\R^n$ is said
  to \emph{converge to~${C \subseteq \R^n}$ in the sense of
    Painlevé-Kuratowski}, denoted by~${C^{\nu} \rightarrow C}$, if the
  following equalities hold:
  \begin{equation*}
    \begin{aligned}
      C &= \left\lbrace x \in \R^n \middle\vert \begin{matrix} \text{for all open
          neighbourhoods } U \text{ of } x,\\ U \cap C^{\nu}  \neq \emptyset
          \text{ for large enough } \nu\end{matrix} \right\rbrace \\
      &= \left\lbrace x \in \R^n \middle\vert \begin{matrix} \text{for all open
        neighbourhoods } U \text{ of } x,\\ U \cap C^{\nu} \neq \emptyset
        \text{ for infinitely many }\nu\end{matrix} \right\rbrace.
    \end{aligned}
  \end{equation*}
\end{definition}

To conserve space and enhance readability we will denote by~$C^{\nu}$ the
sequence~$\{C^{\nu}\}_{\nu \in \mathbb{N}}$ as well as the specific
element~$C^{\nu}$ of this sequence whenever there is no ambiguity. The two
sets in the definition are called the inner and outer limit of~$C^{\nu}$,
respectively. Convergence in the sense of Painlevé-Kuratowski is
weaker than convergence in the Hausdorff distance, but both concepts
coincide when restricted to compact subsets, see Example \ref{EPKHC} and
\cite[pp.~131--138]{RW98}. However, for convex sets Painlevé-Kuratowski
convergence can be characterized using the Hausdorff distance.

\begin{example}[{see \cite[p.~118]{RW98}}]\label{EPKHC} 
  Consider the sequence of sets for which $C^{\nu} = \set{x,y^{\nu}}$
  for $x,y^{\nu} \in \R^n$ and $\norm{y^{\nu}} \rightarrow
  \infty$. Then $C^{\nu}$ converges in the sense of
  Painlevé-Kuratowski to the singleton $C=\set{x}$, but does not
  converge in the Hausdorff distance, because
  $\haus{C^{\nu}}{C}=\norm{x-y^{\nu}} \rightarrow \infty$.
\end{example}

\begin{theorem}[{see \cite[p.~120]{RW98}}]\label{TPKCHD}
  A sequence~$C^{\nu}$ of nonempty closed and convex sets converges to~$C$
  in the sense of Painlevé-Kuratowski if and only if there exist~$x
  \in \R^n$ and~$r_0 \in \R$, such that for all~$r \geq r_0$ it holds, that
  \begin{equation}
    \haus{C^{\nu} \cap B_r(x)}{C \cap B_r(x)} \rightarrow 0.
  \end{equation}
\end{theorem}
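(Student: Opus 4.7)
The plan is to prove each direction separately by handling the outer and inner limit conditions that together characterize Painlevé-Kuratowski convergence.

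For the reverse direction, I would assume the Hausdorff convergence of the truncations for some $x$ and all $r \geq r_0$. To show the outer limit is contained in $C$, take $y$ with a subsequence $y^{\nu_k} \in C^{\nu_k}$ converging to $y$; for any fixed $r \geq \max\{r_0,\norm{y-x}+1\}$, the point $y^{\nu_k}$ eventually lies in $B_r(x)$, so by hypothesis it is within $\haus{C^{\nu_k} \cap B_r(x)}{C \cap B_r(x)}$ of $C \cap B_r(x)$. Passing to the limit and using closedness of $C$ yields $y \in C$. Conversely, to show $C$ is contained in the inner limit, for $y \in C$ fix $r \geq \max\{r_0,\norm{y-x}\}$ so that $y \in C \cap B_r(x)$, and extract $y^{\nu} \in C^{\nu} \cap B_r(x)$ with $y^{\nu} \to y$ directly from the Hausdorff convergence of the truncations.

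For the forward direction, assume $C^{\nu} \to C$ in the sense of Painlevé-Kuratowski. I would pick $x$ in the relative interior of $C$ (which is nonempty by convexity) and fix $\rho > 0$ such that the intersection of $B_{\rho}(x)$ with the affine hull of $C$ is contained in $C$. Then I would choose any $r_0 \geq 2\rho$ and show, for each fixed $r \geq r_0$, that both excesses between the truncations vanish. The excess $\exc{C^{\nu} \cap B_r(x)}{C \cap B_r(x)} \to 0$ is a routine compactness argument: if it failed, one could extract $y^{\nu_k} \in C^{\nu_k} \cap B_r(x)$ staying at least $\varepsilon$ away from $C \cap B_r(x)$, pass to a convergent sub-subsequence in the compact set $B_r(x)$ with limit $y$, and observe that $y$ lies in the outer limit, hence in $C \cap B_r(x)$, contradicting the separation.

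The main obstacle is the reverse excess $\exc{C \cap B_r(x)}{C^{\nu} \cap B_r(x)} \to 0$. Applying the inner limit definition pointwise gives for each $y \in C$ some $y^{\nu} \to y$ with $y^{\nu} \in C^{\nu}$, but these approximating points may stray slightly outside $B_r(x)$, and the convergence is not a priori uniform in $y$. To overcome this I would exploit the interior point: a standard consequence of Painlevé-Kuratowski convergence for convex sets is that every compact subset of the relative interior of $C$ is eventually contained in $C^{\nu}$, so for large $\nu$ the ball $B_{\rho/2}(x)$ (intersected with the affine hull if $C$ is lower dimensional) lies in $C^{\nu}$. Then, given $y \in C \cap B_r(x)$ with approximant $y^{\nu}$, the convex combination $(1-\lambda_{\nu})x + \lambda_{\nu} y^{\nu}$ with $\lambda_{\nu}$ chosen slightly below $1$ lies in $C^{\nu} \cap B_r(x)$ by convexity and still approximates $y$ arbitrarily well. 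Uniformity over $y$ is obtained by covering the compact set $C \cap B_r(x)$ by finitely many small balls and running the pointwise construction on a finite net; the technical core is thus the compact-subset-of-interior fact, with the remainder reducing to the compactness and segment arguments already sketched.
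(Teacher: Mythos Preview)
The paper does not supply a proof of this theorem at all: it is quoted from Rockafellar--Wets \cite[p.~120]{RW98} and used as a black box in the proof of Theorem~\ref{TPKC}. So there is no ``paper's own proof'' to compare against; any assessment is of your argument on its merits.

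Your reverse direction and the first excess bound in the forward direction are fine. The gap is in the second excess bound. You assert that ``every compact subset of the relative interior of $C$ is eventually contained in $C^{\nu}$''. This is true for the \emph{interior} of $C$, but false for the relative interior when $C$ is lower-dimensional: take $C^{\nu}=\{1/\nu\}\times\R$ converging to $C=\{0\}\times\R$ in $\R^2$; no point of $\relint C=C$ ever lies in any $C^{\nu}$. Consequently your shrinking map $(1-\lambda_{\nu})x+\lambda_{\nu}y^{\nu}$ need not land in $C^{\nu}$, because $x$ need not belong to $C^{\nu}$. The fix is simple and in the spirit of your sketch: replace the fixed anchor $x$ by a moving anchor $x^{\nu}\in C^{\nu}$ with $x^{\nu}\to x$ (such a sequence exists since $x\in C$ lies in the inner limit), and shrink toward $x^{\nu}$ instead. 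Convexity of $C^{\nu}$ then puts the combination in $C^{\nu}$, and choosing $\lambda_{\nu}\uparrow 1$ slowly enough keeps it inside $B_r(x)$ while still approximating $y$; the finite-net argument for uniformity goes through unchanged. With that correction your outline is a valid proof.
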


In geometric terms this means that a sequence of nonempty closed and
convex sets converges in the sense of Painlevé-Kuratowski if and only
if it converges in the Hausdorff distance on every nonempty compact
subset. In the remainder of this section we show
that~$\ed$-approximations provide a meaningful notion of polyhedral
approximation for unbounded sets in the sense that a sequence of
approximations converges as defined in Definition \ref{DEDA}
if~$\varepsilon$ and~$\delta$ tend to zero. To this end we need some
preparatory results. The first one yields a bound on the Hausdorff
distance between truncations of a set and truncations of
an~$\ed$-approximation.

\begin{proposition}\label{PHDA}
  Let $C \subseteq \R^n$ be nonempty closed convex and line-free
  and let~$P$ be an~$\ed$-approximation of~$C$. Then for every~$x \in
  \conv \vertices P$ and~$r \geq \varepsilon$ it holds true, that
  \begin{equation}
    \haus{P \cap B_r(x)}{C \cap B_r(x)} \leq 2\left(
      \varepsilon+\delta\left(r+\norm{x-v}\right)\right)
  \end{equation}
  for some~$v \in \conv\vertices P$. In particular, if~$\haus{P \cap
    B_r(x)}{C \cap B_r(x)}$ is attained as~$\norm{p-c}$ with~$p \in P$,
  then~$p = v + td$ for some~$d \in \recc{P}$ and~$t \geq 0$.
\end{proposition}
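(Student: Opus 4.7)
The plan is to first reduce the Hausdorff distance to a one-sided excess, then approximate an attaining point $p^* \in P \cap B_r(x)$ by a point of $C$ using the $\ed$-approximation conditions, and finally to project back into the ball $B_r(x)$. Condition (iii) of Definition~\ref{DEDA} gives $C \cap B_r(x) \subseteq P \cap B_r(x)$ and hence $\haus{P \cap B_r(x)}{C \cap B_r(x)} = \exc{P \cap B_r(x)}{C \cap B_r(x)}$. The truncation $C \cap B_r(x)$ is nonempty: writing $x = \sum_i \lambda_i v_i$ with $v_i \in \vertices P$ and applying condition (i) vertex-wise yields $c_x := \sum_i \lambda_i c_i \in C$ with $\norm{x - c_x} \leq \varepsilon \leq r$. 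Since $P \cap B_r(x)$ is compact, the excess is attained at some $p^* \in P \cap B_r(x)$.

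Because $P$ is a line-free polyhedron, the Minkowski--Weyl decomposition gives $p^* = v + td$ with $v \in \conv \vertices P$, a unit vector $d \in \recc P$, and $t = \norm{p^* - v} \geq 0$; this is the $v$ featured in the bound and the decomposition referenced by the \emph{in particular} clause. The same convex-combination argument produces $c_v \in C$ with $\norm{v - c_v} \leq \varepsilon$, and condition (ii), via the definition of the truncated Hausdorff distance, furnishes $e \in \recc C \cap B_1(0)$ with $\norm{d - e} \leq \delta$. Define $c^* := c_v + te$, which lies in $C$ since $te \in \recc C$. Combined with $t \leq \norm{p^* - x} + \norm{x - v} \leq r + \norm{x - v}$, this gives
\begin{equation*}
\norm{p^* - c^*} \leq \norm{v - c_v} + t\norm{d - e} \leq \varepsilon + \delta\bigl(r + \norm{x - v}\bigr) =: M.
\end{equation*}

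If $c^* \in B_r(x)$, then $c^* \in C \cap B_r(x)$ and the estimate $\norm{p^* - c^*} \leq M \leq 2M$ is immediate. Otherwise $\norm{c^* - x} > r$, while $\norm{c^* - x} \leq \norm{c^* - p^*} + \norm{p^* - x} \leq M + r$, so $c^*$ lies at most $M$ outside $B_r(x)$. I would then follow the segment $[c_x, c^*] \subseteq C$, which crosses $\partial B_r(x)$ at a unique exit point $\tilde c \in C \cap B_r(x)$. A planar geometry argument based on the power of a point with respect to $\partial B_r(x)$---exploiting $\norm{c_x - x} \leq \varepsilon \leq r$ so that the chord of the sphere on this line is well-controlled---bounds $\norm{c^* - \tilde c}$ by $M$, yielding
\begin{equation*}
\norm{p^* - \tilde c} \leq \norm{p^* - c^*} + \norm{c^* - \tilde c} \leq 2M,
\end{equation*}
which is the source of the factor $2$ in the stated bound. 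The \emph{in particular} clause is then precisely the decomposition $p^* = v + td$ recorded in the previous paragraph.

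The main obstacle is this last geometric estimate. Intuitively, since $c_x$ sits very close to the center $x$, the chord of $\partial B_r(x)$ along the line through $c_x$ and $c^*$ is nearly diametral and the overshoot length from the sphere to $c^*$ is of the same order as the radial depth $\norm{c^* - x} - r \leq M$ of $c^*$ outside the ball; making this precise without losing a constant is the delicate bookkeeping that remains, and is what ultimately dictates the factor $2$ appearing in the proposition.
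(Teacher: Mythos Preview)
Your reduction to the excess, the decomposition $p^*=v+td$, and the bound $\|p^*-c^*\|\le M:=\varepsilon+\delta(r+\|x-v\|)$ are all correct. The gap is precisely where you flag it: the projection step does \emph{not} yield $\|c^*-\tilde c\|\le M$, and this is not a matter of bookkeeping. Your intuition ``$c_x$ sits very close to the center'' presumes $\varepsilon\ll r$, but the proposition allows $r=\varepsilon$. Take $x=0$, $r=\varepsilon=1$, $c_x=(0,1)$, $c^*=(2,0)$; then $\|c^*-x\|-r=1\le M$ yet the exit point of $[c_x,c^*]$ from $B_1(0)$ is $\tilde c=(\tfrac{4}{5},\tfrac{3}{5})$ and $\|c^*-\tilde c\|=3/\sqrt{5}\approx 1.34>M$. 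More generally, the power-of-a-point identity gives $\|c^*-\tilde c\|\cdot\|c^*-A\|=\|c^*-x\|^2-r^2\le M(2r+M)$, while the chord length $\|\tilde c-A\|=2\sqrt{r^2-h^2}$ (with $h$ the distance from $x$ to the line) can be as small as $2\sqrt{r^2-\varepsilon^2}$, which vanishes when $r=\varepsilon$. So the bound you extract degenerates in exactly the regime the proposition must cover.

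The paper avoids this by reversing the order of the two operations: instead of approximating $p^*$ by a point of $C$ and then trying to push that point back into $B_r(x)$, it first retracts $p^*$ \emph{inside $P$} along the segment $z(\lambda)=\lambda p^*+(1-\lambda)x$ by exactly the amount $M$, obtaining $z(\lambda^*)$ with $\|z(\lambda^*)-x\|\le r-M$, and only then uses the $\ed$-conditions to produce a point of $C$ within $M$ of $z(\lambda^*)$. This approximating point lands in $B_r(x)$ automatically, and $\|p^*-z(\lambda^*)\|+\|z(\lambda^*)-c\|\le M+M=2M$ with no geometric loss. The case analysis in the paper (whether $z(\lambda^*)$ still lies in $\conv\vertices P$ or has already acquired a recession component) is what produces the decomposition $p^*=v+td$ and the specific $v$ in the bound; the factor $2$ is then immediate rather than the residue of a sphere-intersection estimate. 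If you want to salvage your route, you would need to retract along a segment in $C$ whose starting point is guaranteed to lie deep in $B_r(x)$ (distance $\le r-M$ from $x$), and $c_x$ does not have that property.
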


\begin{proof}
  Denote by $\bar{P}$, $\bar{C}$, and $V$, $P \cap B_r(x)$, $C \cap
  B_r(x)$, and $\conv \vertices P$, respectively. Since~$\bar{P},
  \bar{C}$ are nonempty convex and compact,~$\haus{\bar{P}}{\bar{C}} =
  \norm{p^*-c^*}$ for some~$p^* \in \bar{P}$ and~$c^* \in
  \bar{C}$. For~$\lambda \in [0,1]$ let~$z(\lambda) = \lambda p^* +
  (1-\lambda) x$. We distinguish two cases. First, assume~$p^* \in
  V$. Then~$z(\lambda) \in V$ for every~${\lambda \in [0,1]}$ and, due
  to (i) in Definition \ref{DEDA}, there is~${c_{\lambda} \in \bar{C}}$
  with~${\norm{z(\lambda)-c_{\lambda}} \leq
    \varepsilon}$. If~$\norm{p^*-x} \leq \varepsilon$ then
  \[ \norm{p^*-c^*} \leq \norm{p^*-c_0} \leq \norm{p^*-x} +
    \norm{x-c_0} \leq 2\varepsilon. \] If~$\norm{p^*-x} > \varepsilon$,
  set ${\lambda}^* = \frac{\varepsilon}{\norm{p^*-x}}$. Similarly, we
  have
  \[ \norm{p^*-c^*} \leq \norm{p^*-c_{{\lambda}^*}} \leq
    \norm{p^*-z({\lambda}^*)} + \norm{z({\lambda}^*)-c_{{\lambda}^*}} \leq
    2\varepsilon. \] Now, assume~${p^* \notin V}$. Then there
  exists~$\bar{\lambda} \in (0,1)$, such that~$z(\bar{\lambda}) \in V$
  and~$z(\lambda) \notin V$ for all $\lambda \in
  (\bar{\lambda},1]$. For~${\lambda \in (\bar{\lambda},1]}$,
  $z(\lambda)$ can be written as ${z(\lambda) = v_{\lambda} +
    t_{\lambda}d_{\lambda}}$ with $v_{\lambda} = \argmin \{
      \norm{z(\bar{\lambda}) - v} \;\mid\;$ $v \in \left(\{z(\lambda)\} -
        \recc{P}\right) \cap V \}$, $t_{\lambda} \geq 0$ and
  $d_{\lambda} \in \recc{P}$, $\norm{d_{\lambda}} = 1$. By the
  definition of $\ed$-approximation there exist~$c_{\lambda} \in C$
  and~$\bar{d_{\lambda}} \in \recc{C}$ such
  that~$\norm{v_{\lambda}-c_{\lambda}} \leq \varepsilon$
  and~$\norm{d_{\lambda}-\bar{d_{\lambda}}} \leq \delta$. Now we have
  \begin{equation}\label{proof_PHDA_1}
    \norm{z(\lambda) - (c_{\lambda} + t_{\lambda}\bar{d_{\lambda}})} \leq
    \norm{v_{\lambda}-c_{\lambda}} + t_{\lambda}
    \norm{d_{\lambda}-\bar{d_{\lambda}}} \leq \varepsilon +
    t_{\lambda}\delta.
  \end{equation}
  Furthermore, $t_{\lambda} =
  \norm{z(\lambda)-v_{\lambda}}$, because $\norm{d_{\lambda}} =
  1$. Hence,
  \begin{equation}\label{proof_PHDA_2}
    \begin{aligned} t_{\lambda} &\leq
      \norm{z(\lambda)-z(\bar{\lambda})} + \norm{z(\bar{\lambda}) -
        v_{\lambda}} \leq \norm{z(\lambda)-z(\bar{\lambda})} +
      \norm{z(\bar{\lambda})-v_1} \\ &\leq
      \norm{z(\lambda)-z(\bar{\lambda})} + \norm{z(\bar{\lambda})-x} +
      \norm{x -v_1} \\ &= \norm{z(\lambda)-x} + \norm{x-v_1}\\ &\leq r
      + \norm{x-v_1}.
    \end{aligned}
  \end{equation}
If $\frac{r-\varepsilon}{r+\norm{x-v_1}} < \delta \leq 1$ then \[
    \norm{p^*-c^*} \leq \norm{p^*-c_0} \leq \norm{p^*-x} +
    \norm{x-c_0} \leq r + \varepsilon \leq 2(\varepsilon + \delta
    (r+\norm{x-v_1})). \] Otherwise, the last inequality is
  violated. In this case, let~${\lambda}^* = 1-\frac{\varepsilon +
    \delta(r\norm{x-v_1})}{\norm{p^*-x}}$. If~${\lambda}^* \leq
  \bar{\lambda}$ then $z({\lambda}^*) \in V$ and there
  exists~$c_{{\lambda}^*} \in C$ with~$\norm{z({\lambda}^*) -
    c_{{\lambda}^*}} \leq \varepsilon$. Therefore,
  \[ \norm{p^*-c^*} \leq \norm{p^*-z({\lambda}^*)} +
    \norm{z({\lambda}^*)-c_{{\lambda}^*}} \leq 2 \varepsilon + \delta
    (r+\norm{x-v_1}). \] If~${\lambda}^* > \bar{\lambda}$ then,
  according to \eqref{proof_PHDA_1} and \eqref{proof_PHDA_2}, there
  exists~$c \in C$ such that $\norm{z({\lambda}^*)-c} \leq \varepsilon
  + \delta(r+\norm{x-v_1})$. Altogether this yields \[ \norm{p^*-c^*}
    \leq \norm{p^*-c} \leq \norm{p^*-z({\lambda}^*)} +
    \norm{z({\lambda}^*)-c} \leq 2 \left( \varepsilon + \delta
      (r+\norm{x-v_1})\right), \] which completes the proof.
\end{proof}

We need two more results before we can prove Theorem \ref{TPKC}
below.

\begin{lemma}\label{LBS}
  Let $C \subseteq \R^n$ be nonempty closed and convex and let there
  be sequences~$v^{\nu}$,~$r^{\nu}$ such that~$\inf_{c \in C}
  \norm{v^{\nu}-c} \rightarrow 0$, $\inf_{r \in \recc{C}}
  \norm{r^{\nu}-r} \rightarrow 0$, and $v^{\nu}+r^{\nu} \in
    B_M(x)$ for some $M \geq 0$ and $x \in \R^n$. If $C$ is
  line-free, then $\norm{v^{\nu}}$ is bounded.
\end{lemma}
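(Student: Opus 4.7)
The plan is to argue by contradiction. Suppose $\norm{v^\nu}$ is unbounded; after passing to a subsequence we may assume $\norm{v^\nu} \to \infty$. Choose $c^\nu \in C$ with $\norm{v^\nu - c^\nu} \to 0$ (such $c^\nu$ exist, for instance, by taking the best approximation since $C$ is closed). Then $\norm{c^\nu} \to \infty$ as well, and a short computation using $\norm{v^\nu - c^\nu}/\norm{v^\nu} \to 0$ gives
\begin{equation*}
\left\lVert \frac{v^\nu}{\norm{v^\nu}} - \frac{c^\nu}{\norm{c^\nu}} \right\rVert \to 0.
\end{equation*}
Passing to a further subsequence, we may assume $v^\nu/\norm{v^\nu} \to d$ for some unit vector $d$, and then also $c^\nu/\norm{c^\nu} \to d$.

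The first key step is to show $d \in \recc C$. This follows from the standard characterization of the recession cone of a closed convex set: picking any $c_0 \in C$, the points $(c^\nu - c_0)/\norm{c^\nu - c_0}$ lie in the cone $\tfrac{1}{\norm{c^\nu - c_0}}(C - c_0)$, and, since $\norm{c^\nu - c_0} \to \infty$ and $\recc C$ is closed, their limit direction $d$ belongs to $\recc C$.

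The second key step is to show $-d \in \recc C$. Here the hypothesis $v^\nu + r^\nu \in B_M(x)$ is used: dividing by $\norm{v^\nu}$ yields $r^\nu/\norm{v^\nu} = (v^\nu + r^\nu)/\norm{v^\nu} - v^\nu/\norm{v^\nu} \to -d$. Choose now $\bar r^\nu \in \recc C$ with $\norm{r^\nu - \bar r^\nu} \to 0$. Since $\recc C$ is a cone, $\bar r^\nu/\norm{v^\nu} \in \recc C$ for every $\nu$, and closedness of $\recc C$ together with $\bar r^\nu/\norm{v^\nu} \to -d$ gives $-d \in \recc C$.

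Combining the two steps we find $d, -d \in \recc C$ with $\norm{d}=1$, so $\recc C$ contains the line $\R d$. This contradicts the assumption that $C$ is line-free, which by the characterization cited in the preliminaries means $\recc C$ is pointed. The main obstacle is just being careful about the normalizations and subsequence extractions; the geometric idea, that an unbounded $v^\nu$ together with a bounded $v^\nu + r^\nu$ forces opposite recession directions, is straightforward.
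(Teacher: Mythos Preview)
Your proof is correct and takes a genuinely different, more streamlined route than the paper's.

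Both arguments proceed by contradiction and aim to exhibit a nonzero $d$ with $\pm d \in \recc{C}$. The paper normalizes $r^{\nu}$ by $\norm{r^{\nu}}$, so that $d := \lim r^{\nu}/\norm{r^{\nu}} \in \recc{C}$ is immediate from the hypothesis on $r^{\nu}$, and then invests the work in showing $-d \in \recc{C}$: it fixes $c \in C$ and $t \geq 0$, builds an explicit convex combination $\bar y = \tfrac{t}{\norm{r^{\nu_T}}} y^{\nu_T} + \bigl(1-\tfrac{t}{\norm{r^{\nu_T}}}\bigr) c \in C$, and estimates $\norm{(c - t d^{\nu_T}) - \bar y}$ to conclude $c - td \in C$. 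You instead normalize $v^{\nu}$ by $\norm{v^{\nu}}$ and split the work symmetrically: the direction $d = \lim v^{\nu}/\norm{v^{\nu}}$ lies in $\recc{C}$ by the standard characterization of recession directions via unbounded sequences in $C$ (applied to the $c^{\nu}$), while $-d \in \recc{C}$ follows at once from $r^{\nu}/\norm{v^{\nu}} \to -d$ together with the hypothesis that $r^{\nu}$ is asymptotically in the closed cone $\recc{C}$.

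In effect, the paper's convex combination argument is an inline proof of the very recession-cone fact you invoke; by citing it, you trade that computation for a single reference and obtain a shorter, more transparent proof. The paper's version is slightly more self-contained, but your approach makes the underlying symmetry between the two hypotheses on $v^{\nu}$ and $r^{\nu}$ much clearer.
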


\begin{proof}
  Assume that $\norm{v^{\nu}}$ is unbounded. This implies
  that~$\norm{r^{\nu}}$ is also unbounded and $\recc{C} \neq
  \{0\}$. Without loss of generality let $r^{\nu} \neq 0$ for all
  $\nu$. Then $d^{\nu} := r^{\nu}/\norm{r^{\nu}}$ is bounded and has a
  convergent subsequence. Without loss of generality we can assume
  that $d^{\nu} \rightarrow d \in \recc{C}$. We will show, that $-d
  \in \recc{C}$. Therefore let $c \in C$, $t \geq 0$ and define $y^{\nu} =
  \argmin_{y \in C} \norm{v^{\nu}-y}$. By the triangle inequality it
  holds true, that
  \begin{equation}\label{proof_LBS}
    \norm{c-r^{\nu}-y^{\nu}} \leq M + \norm{c-x} +
    \norm{v^{\nu}-y^{\nu}} =: \ols{M}^{\nu}.
  \end{equation}
  Note, that $\ols{M}^{\nu}$ is bounded from above by some $\ols{M}$, because
  $\norm{v^{\nu}-y^{\nu}} \rightarrow 0$. For every $T \geq 0$ there
  exists some $\nu_T$, such that $\norm{r^{\nu_T}} \geq T$. Let $T
  \geq t$ and define
  \[ \bar{y} := \frac{t}{\norm{r^{\nu_T}}} y^{\nu_T} + \left(
      1-\frac{t}{\norm{r^{\nu_T}}} \right) c \in C. \]
  Putting it all together one gets
  \begin{equation*}
    \begin{aligned}
      \inf_{y \in C} \norm{(c-td^{\nu_T})-y} &\leq
      \norm{(c-td^{\nu_T})-\bar{y}} \\
      &= \norm{\frac{t}{\norm{r^{\nu_T}}}c
        -td^{\nu_T}-\frac{t}{\norm{r^{\nu_T}}}y^{\nu_T}} \\
      &= \frac{t}{\norm{r^{\nu_T}}} \norm{c-r^{\nu_T}-y^{\nu_T}} \\
      &\leq \frac{t}{T} \ols{M},
    \end{aligned}
  \end{equation*}
  where the last inequality holds due to \eqref{proof_LBS} and
  boundedness of $\ols{M}^{\nu}$. Since $C$ is closed and $d^{\nu}
  \rightarrow d \in \recc{C}$, taking the limit $T \rightarrow
  +\infty$ yields that $c-td \in C$. This is a contradiction to the
  pointedness of $\recc{C}$.
\end{proof}

Every closed and line-free convex set $C$ can be written as the convex
hull of its extreme points plus its recession cone
\cite[p.~35]{Hol75}. In particular, Lemma \ref{LBS} states that the
set of convex combinations of extreme points for which a given point
in $C$ can be decomposed in such a fashion is compact. The next result
establishes a relation between extreme points of $C$ and the vertices
of an $\ed$-approximation.

\begin{proposition}\label{PEP}
  Let $C \subseteq \R^n$ be nonempty closed convex and
  line-free. For $\nu \in \mathbb{N}$ let~$P^{\nu}$ be an
  $(\varepsilon^{\nu},\delta^{\nu})$-approximation of
  $C$. If~${(\varepsilon^{\nu},\delta^{\nu}) \rightarrow (0,0)}$, then
  for every extreme point~$c$ of $C$ there exists a sequence~${x^{\nu}
    \rightarrow c}$ such that $x^{\nu} \in \conv \vertices P^{\nu}$.
\end{proposition}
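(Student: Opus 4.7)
The strategy is to read off the desired sequence from decompositions of $c$ inside the polyhedra $P^\nu$. Since $c \in C \subseteq P^\nu$ and $P^\nu$ is line-free, $P^\nu = \conv \vertices P^\nu + \recc P^\nu$, so there exist $v^\nu \in \conv \vertices P^\nu$ and $r^\nu \in \recc P^\nu$ with $c = v^\nu + r^\nu$. Setting $x^\nu := v^\nu$, it suffices to prove $\norm{r^\nu} \to 0$, and I would do this by contradiction. Two approximation facts will be used repeatedly. By Definition \ref{DEDA}(i), every vertex of $P^\nu$ lies within $\varepsilon^\nu$ of $C$; taking the analogous convex combination of nearest points in $C$ produces $c^\nu \in C$ with $\norm{v^\nu - c^\nu} \leq \varepsilon^\nu$. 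By Definition \ref{DEDA}(ii) applied to the unit vector $r^\nu/\norm{r^\nu}$ and then rescaling, for any nonzero $r^\nu$ there is $\bar r^\nu \in \recc C$ with $\norm{r^\nu - \bar r^\nu} \leq \delta^\nu \norm{r^\nu}$.

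Assuming $\norm{r^\nu} \not\to 0$, I would extract a subsequence with $\norm{r^\nu} \geq \eta > 0$ and split on whether $\norm{r^\nu}$ is bounded. In the bounded case, pass to a further subsequence with $r^\nu \to r^* \neq 0$. Then $\delta^\nu \norm{r^\nu} \to 0$, so closedness of $\recc C$ places $r^* \in \recc C$; simultaneously $v^\nu = c - r^\nu \to c - r^*$ and $\norm{v^\nu - c^\nu} \le \varepsilon^\nu \to 0$ together with closedness of $C$ place $c - r^* \in C$. Then $c - r^*$ and $(c - r^*) + 2 r^* = c + r^*$ both lie in $C$, are distinct because $r^* \neq 0$, and have midpoint $c$, contradicting extremality of $c$.

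The main obstacle is the unbounded case $\norm{r^\nu} \to \infty$, where the estimate $\norm{r^\nu - \bar r^\nu} \leq \delta^\nu \norm{r^\nu}$ no longer forces $r^\nu$ itself to approach $\recc C$ (so Lemma \ref{LBS} cannot be invoked directly). My plan is to pass to the unit directions $d^\nu := r^\nu/\norm{r^\nu}$ and extract a subsequence with $d^\nu \to d$ a unit vector; the truncated Hausdorff bound on $d^\nu \in \recc P^\nu \cap B_1(0)$ together with closedness of $\recc C$ then gives $d \in \recc C$. For the opposite direction, $\norm{c^\nu - c} \geq \norm{r^\nu} - \varepsilon^\nu \to \infty$ and $(c^\nu - c)/\norm{c^\nu - c} \to -d$, so for each fixed $t \geq 0$ the point $c + t(c^\nu - c)/\norm{c^\nu - c}$ lies on the segment $[c, c^\nu] \subseteq C$ and converges to $c - td$; closedness of $C$ yields $c - td \in C$ for all $t \geq 0$, hence $-d \in \recc C$. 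Combined with $d \in \recc C$ and $d \neq 0$, this contradicts pointedness of $\recc C$, which holds because $C$ is line-free. Therefore $\norm{r^\nu} \to 0$ and $x^\nu = v^\nu \to c$, as required.
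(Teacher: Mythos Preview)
Your argument is correct and follows the same line as the paper's proof. Both proofs decompose $c = v^\nu + r^\nu$ with $v^\nu \in \conv\vertices P^\nu$ and $r^\nu \in \recc{P^\nu}$, assume $\norm{r^\nu}\not\to 0$, and derive a contradiction with the extremality of $c$ (bounded case) or the pointedness of $\recc{C}$ (unbounded case).

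The only structural difference is packaging: the paper invokes Lemma~\ref{LBS} to rule out $\norm{r^\nu}\to\infty$, whereas you unpack that argument inline. Your caution here is actually well placed. The hypothesis of Lemma~\ref{LBS} asks for $\inf_{r\in\recc{C}}\norm{r^\nu-r}\to 0$, but from $\thaus{\recc{P^\nu}}{\recc{C}}\le\delta^\nu$ one only gets $\inf_{r\in\recc{C}}\norm{r^\nu-r}\le\delta^\nu\norm{r^\nu}$, which is not known to vanish a priori when $\norm{r^\nu}$ is unbounded. Your direct treatment of the unbounded case---passing to unit directions $d^\nu\to d\in\recc{C}$ and showing $c-td\in C$ for all $t\ge 0$ via the segments $[c,c^\nu]$---is precisely the mechanism inside the proof of Lemma~\ref{LBS}, so nothing is lost and the gap is closed explicitly. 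The bounded case and the final midpoint contradiction $c=\tfrac12(c-r^*)+\tfrac12(c+r^*)$ are identical to the paper's.
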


\begin{proof} Since $C$ is line-free, it has at least one extreme
  point. Let $c$ be one such extreme point. Assume that for every
  sequence $x^{\nu}$ with~${x^{\nu} \in \conv \vertices P^{\nu}}$ there
  exists a $\gamma > 0$, such that~${\norm{x^{\nu}-c} > \gamma}$ for
  infinitely many $\nu$. Then, without loss of generality, there exists
  one such sequence such that~$\norm{x^{\nu}-c} > \gamma$ for every
  $\nu$ and, since $C \subseteq P^{\nu}$, $c=x^{\nu}+r^{\nu}$ for some
  $r^{\nu} \in \recc{P^{\nu}}$. By Lemma \ref{LBS} it holds that
  $\norm{x^{\nu}}$ and $\norm{r^{\nu}}$ are bounded. Then there exist
  subsequences $x^{\nu_k}$, $r^{\nu_k}$ such that
  \[ x^{\nu_k} \rightarrow x \in C, \quad r^{\nu_k} \rightarrow r \in
    \recc{C}. \] Note that $r \neq 0$, because $\norm{r^{\nu}} > \gamma$
  for all $\nu$. Finally,
  \[ c=x+r=\frac{1}{2}\left( x+2r \right) + \frac{1}{2} x. \] This is
  a contradiction to $c$ being an extreme point of $C$.
\end{proof}

We are now ready to prove the main result.

\begin{theorem}\label{TPKC}
  Let $C \subseteq \R^n$ be nonempty closed convex and
  line-free. For $\nu \in \mathbb{N}$ let~$P^{\nu}$ be
  an~$(\varepsilon^{\nu},\delta^{\nu})$-approximation of
  $C$. If~${(\varepsilon^{\nu},\delta^{\nu}) \rightarrow (0,0)}$,
  then~${P^{\nu} \rightarrow C}$ in the sense of Painlevé-Kuratowski.
\end{theorem}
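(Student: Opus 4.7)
The strategy is to check the two inclusions that characterize Painlevé--Kuratowski convergence. The inner-limit inclusion $C \subseteq \liminf_\nu P^\nu$ is immediate: since $P^\nu \supseteq C$ for every $\nu$, the constant sequence $c^\nu \equiv c$ witnesses $c \in \liminf_\nu P^\nu$ for every $c \in C$. The remaining task is to prove the outer-limit inclusion $\limsup_\nu P^\nu \subseteq C$, which amounts to showing that whenever $p^{\nu_k} \in P^{\nu_k}$ with $p^{\nu_k} \to p$, the limit $p$ lies in $C$. I would obtain this by feeding a sufficiently large ball into Proposition~\ref{PHDA} and showing that the bound furnished there vanishes.

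To invoke Proposition~\ref{PHDA} one needs a reference point in $\conv \vertices P^\nu$. Since $C$ is line-free, it has an extreme point $\hat c$ \cite[Corollary 18.5.3]{Roc70}, and Proposition~\ref{PEP} supplies a sequence $x^\nu \to \hat c$ with $x^\nu \in \conv \vertices P^\nu$. Given a cluster point $p$ and a subsequence $p^{\nu_k} \to p$, fix any $r > \norm{p - \hat c} + 1$; then for large $k$ we have both $p^{\nu_k} \in B_r(x^{\nu_k})$ and $r \geq \varepsilon^{\nu_k}$, so Proposition~\ref{PHDA} produces $v^{\nu_k} \in \conv \vertices P^{\nu_k}$ satisfying
\[\haus{P^{\nu_k} \cap B_r(x^{\nu_k})}{C \cap B_r(x^{\nu_k})} \leq 2\bigl(\varepsilon^{\nu_k} + \delta^{\nu_k}(r + \norm{x^{\nu_k} - v^{\nu_k}})\bigr).\]
If $\norm{x^{\nu_k} - v^{\nu_k}}$ stays bounded in $k$, the right-hand side tends to zero, and because $C \cap B_r(x^{\nu_k})$ is contained in $P^{\nu_k} \cap B_r(x^{\nu_k})$, this Hausdorff distance equals the excess attained at a point of $P^{\nu_k}$; I obtain $c^{\nu_k} \in C$ with $\norm{p^{\nu_k} - c^{\nu_k}} \to 0$, whence $c^{\nu_k} \to p$ and $p \in C$ by closedness.

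The main obstacle is therefore to show that $\norm{v^{\nu_k}}$, equivalently $\norm{x^{\nu_k} - v^{\nu_k}}$, remains bounded in $k$. The ``in particular'' clause of Proposition~\ref{PHDA} gives a decomposition $p^{*,\nu_k} = v^{\nu_k} + t^{\nu_k} d^{\nu_k}$ with $d^{\nu_k} \in \recc{P^{\nu_k}}$ of unit norm, $t^{\nu_k} \geq 0$, and $p^{*,\nu_k} \in B_r(x^{\nu_k})$, so $v^{\nu_k} + t^{\nu_k} d^{\nu_k}$ is bounded. I would then mimic the argument of Lemma~\ref{LBS}: assume for contradiction that $\norm{v^{\nu_k}} \to \infty$ along a subsequence and let $u$ be a cluster point of $v^{\nu_k}/\norm{v^{\nu_k}}$ with $\norm{u}=1$. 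The bound $\exc{\vertices P^{\nu_k}}{C} \leq \varepsilon^{\nu_k}$ together with convexity of $C$ yields $c^{\nu_k} \in C$ with $\norm{v^{\nu_k} - c^{\nu_k}} \leq \varepsilon^{\nu_k}$, so $c^{\nu_k}/\norm{c^{\nu_k}} \to u$, and the standard recession-direction argument (approximating any fixed $c_0 \in C$ by $(1-t/\norm{c^{\nu_k}})c_0 + (t/\norm{c^{\nu_k}})c^{\nu_k}$) forces $u \in \recc{C}$. Dividing $v^{\nu_k} + t^{\nu_k} d^{\nu_k}$ by $\norm{v^{\nu_k}}$ then yields $t^{\nu_k}/\norm{v^{\nu_k}} \to 1$ and $d^{\nu_k} \to -u$ along a further subsequence. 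Finally, $\thaus{\recc{P^{\nu_k}}}{\recc{C}} \leq \delta^{\nu_k}$ provides unit vectors in $\recc{C}$ within $\delta^{\nu_k}$ of $d^{\nu_k}$, whose limit $-u$ lies in $\recc{C}$ by closedness. Then $\pm u \in \recc{C}$ with $\norm{u}=1$ contradicts pointedness of $\recc{C}$, i.e.\ line-freeness of $C$. This establishes boundedness of $\norm{v^{\nu_k}}$ and completes the proof.
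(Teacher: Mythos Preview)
Your proof is correct and rests on the same three tools as the paper's argument: Proposition~\ref{PEP} to anchor a convergent sequence of base points $x^\nu\to\hat c$, Proposition~\ref{PHDA} to control the local Hausdorff error, and the Lemma~\ref{LBS} mechanism (which you redo inline) to bound the auxiliary vertices $v^{\nu_k}$. The organizational difference is that the paper verifies Painlev\'e--Kuratowski convergence through the Hausdorff-on-balls characterization (Theorem~\ref{TPKCHD}), which forces it to control $\haus{P^\nu\cap B_r(c)}{C\cap B_r(c)}$ for \emph{all} large $r$ and hence to insert a triangle-inequality step shifting between $B_r(c)$ and $B_r(x^\nu)$; you instead check the inner and outer limits directly, so a single convergent subsequence and a single radius $r$ suffice, and the shifting step disappears. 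Your inline boundedness argument is also slightly cleaner than the paper's appeal to Lemma~\ref{LBS}, since the lemma's hypothesis $\inf_{r\in\recc{C}}\norm{d^\nu-r}\to 0$ is not literally available (only the truncated-Hausdorff control on unit directions is), whereas your version works directly with the normalized recession directions. One cosmetic point: the ``in particular'' clause of Proposition~\ref{PHDA} does not state that $d$ has unit norm, so you should normalize $t^{\nu_k}d^{\nu_k}$ yourself before extracting the limit $-u$; this changes nothing of substance.
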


\begin{proof}
  By Theorem \ref{TPKCHD} we must show that there exist $c \in \R^n$
  and $r_0 \geq 0$ such that for all $r \geq r_0$ it holds $\haus{P^{\nu} \cap B_r(c)}{C \cap B_r(c)}
  \rightarrow 0$. Let~${r \geq \max_{\nu \in
      \mathbb{N}} \varepsilon^{\nu}}$ and let $c$ be an extreme point of $C$,
  which exists, because $C$ contains no lines. By Proposition \ref{PEP}
  there exists a sequence~${x^{\nu} \rightarrow c}$ with~${x^{\nu} \in
    \conv \vertices P^{\nu}}$. Applying the triangle inequality and
  Proposition \ref{PHDA} yields
  \begin{equation*}
    \begin{aligned}
      & & &  \haus{P^{\nu} \cap B_r(c)}{C \cap B_r(c)} \\
      & \leq & & \haus{P^{\nu} \cap B_r(c)}{P^{\nu} \cap B_r(x^{\nu})}
      \\ & & + & \haus{P^{\nu} \cap B_r(x^{\nu})}{C \cap B_r(x^{\nu})}
      \\ & & + & \haus{C \cap B_r(x^{\nu})}{C \cap B_r(c)} \\
      & \leq & & \haus{P^{\nu} \cap B_r(c)}{P^{\nu} \cap B_r(x^{\nu})}
      \\ & & + & 2(\varepsilon^{\nu}+\delta^{\nu}(r+\norm{x^{\nu}-v^{\nu}}))
      \\ & & + & \haus{C \cap B_r(x^{\nu})}{C \cap B_r(c)},
   \end{aligned}
 \end{equation*}
 for some $v^{\nu} \in \conv \vertices P^{\nu}$. The first and third
 term in this sum converge to zero as~${x^{\nu} \rightarrow c}$. It
 remains to show that~$\norm{x^{\nu}-v^{\nu}}$ is
 bounded. Since $C \subseteq P^{\nu}$, the distance ${\haus{P^{\nu}
   \cap B_r(x^{\nu})}{C \cap B_r(x^{\nu})}}$ is attained as
 $\exc{P^{\nu} \cap B_r(x^{\nu})}{C \cap B_r(x^{\nu})}$. Let the
 supremum be attained by~$p^{\nu} \in P^{\nu}$. Then $p^{\nu} =
   v^{\nu} + d^{\nu}$ for some~$d^{\nu} \in \recc{P^{\nu}}$. It holds
 \[ \norm{p^{\nu}-c} \leq \norm{p^{\nu}-x^{\nu}} + \norm{x^{\nu}-c}
   \leq r + \max_{\nu \in \mathbb{N}} \norm{x^{\nu}-c} < +\infty, \]
 i.e. $v^{\nu}+d^{\nu} \in B_M(c)$ for some $M \geq 0$. Therefore, the
 sequence $\norm{v^{\nu}}$ is bounded according to Lemma
 \ref{LBS}. Hence, $\norm{x^{\nu}-v^{\nu}}$ is also bounded
 and $\haus{P^{\nu} \cap B_r(c)}{C \cap B_r(c)} \rightarrow 0$,
 which was to be proved.
\end{proof}

Theorem \ref{TPKC} justifies the definition of $\ed$-approximations,
i.e. it states that they define a meaningful notion of
approximation. We close this section with the observation that
$\ed$-approximations reduce to $\varepsilon$-approximations in the
compact case.

\begin{corollary}
Let $C \subseteq \R^n$ be a convex body and $P \subseteq \R^n$ be a
polyhedron. For $\varepsilon \geq 0$ and $\delta \in [0,1)$ the
following are equivalent.
\begin{enumerate}[(i)]
\item $P$ is an $\ed$-approximation of $C$.
\item $P \supseteq C$ and $\haus{P}{C} \leq \varepsilon$.
\end{enumerate}
\end{corollary}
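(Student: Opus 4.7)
The plan is to exploit the fact that a convex body is compact, hence has trivial recession cone, and to use the strict inequality $\delta < 1$ to pin down the recession cone of $P$ as well. Once both recession cones are $\set{0}$, the polyhedron $P$ is actually a polytope, and the two notions collapse.

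For the direction (i) $\Rightarrow$ (ii), I would first argue that $\recc C = \set{0}$ since $C$ is compact. Then, for any nonzero $d \in \recc P$, the normalized vector $d/\norm{d}$ lies in $\recc P \cap B_1(0)$ at Euclidean distance $1$ from the only point $0$ of $\recc C \cap B_1(0)$. This would force $\thaus{\recc P}{\recc C} \geq 1$, contradicting condition (ii) of Definition \ref{DEDA} since $\delta < 1$. Hence $\recc P = \set{0}$, so $P$ is a polytope and equals $\conv \vertices P$. Next, I would note that the function $x \mapsto \inf_{c \in C} \norm{x-c}$ is convex on $\R^n$, and therefore attains its maximum over the polytope $P = \conv \vertices P$ at a vertex. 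This yields $\exc{P}{C} = \exc{\vertices P}{C} \leq \varepsilon$, while $\exc{C}{P} = 0$ thanks to $P \supseteq C$, giving $\haus{P}{C} \leq \varepsilon$.

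For the direction (ii) $\Rightarrow$ (i), I would apply Proposition \ref{PHDRC}: since $\haus{P}{C} \leq \varepsilon < +\infty$, the recession cones agree, so $\recc P = \recc C = \set{0}$. Thus $P$ is line-free with $\vertices P \neq \emptyset$, and condition (iii) of Definition \ref{DEDA} is exactly the hypothesis $P \supseteq C$. For condition (ii) one has $\thaus{\recc P}{\recc C} = \haus{\set{0}}{\set{0}} = 0 \leq \delta$. Condition (i) of Definition \ref{DEDA} follows from the trivial bound $\exc{\vertices P}{C} \leq \exc{P}{C} \leq \haus{P}{C} \leq \varepsilon$.

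There is no real obstacle here; the only subtle point is the use of $\delta < 1$ in the forward direction, without which one could not rule out a nontrivial recession cone of $P$. Everything else is bookkeeping against the definitions and Proposition \ref{PHDRC}.
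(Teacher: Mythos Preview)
Your proof is correct and follows essentially the same route as the paper: use $\delta<1$ together with $\recc C=\{0\}$ to force $\recc P=\{0\}$, deduce that $P$ is a polytope so the excess is attained at a vertex, and for the converse invoke Proposition~\ref{PHDRC} to obtain $\recc P=\recc C=\{0\}$. The only difference is cosmetic: you spell out the unit-vector argument for $\thaus{\recc P}{\{0\}}\geq 1$ and the convexity of the distance function, whereas the paper leaves these implicit.
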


\begin{proof}
  Since $C$ is compact, $\recc{C} = \{0\}$. Then
  $\thaus{\recc{P}}{\{0\}} < 1$ implies that $\recc{P} = \{0\}$,
  i.e. $P$ is compact as well, because otherwise one would have
  $\thaus{\recc{P}}{\{0\}} = 1$. Therefore, $P$ is the convex hull of
  its vertices. Because $P \supseteq C$, $\haus{P}{C}$ is attained as
  $\exc{P}{C}$. But $\exc{P}{C}$ is attained in a vertex of $P$,
  i.e. $\exc{P}{C} = \exc{\vertices P}{C}$. Hence, $\haus{P}{C} \leq
  \varepsilon$. On the other hand, if $\haus{P}{C} \leq \varepsilon$,
  then $P$ must be compact by Proposition \ref{PHDRC}. Then $\recc{P}
  = \recc{C}$ and $P$ is an $(\varepsilon,0)$-approximation of $C$ and
  in particular an $\ed$-approximation.
\end{proof}

\section{An Algorithm for the Polyhedral Approximation of Unbounded
  Spectrahedra}\label{SEC_5}
In this section we present an algorithm for computing
$\ed$-approximations of closed convex and line-free sets $C$ whose
interior is nonempty. We also prove correctness and finiteness of the
algorithm. The algorithm employs a cutting scheme, a procedure for
approximating convex bodies by polyhedra that is introduced in
\cite{Kam92}. A cutting scheme is an iterative algorithm that computes
a sequence of polyhedral outer approximations by successively
intersecting the approximation with new halfspaces. In doing so,
vertices of the current approximation are cut off, hence the name. The
calculation of these halfspaces is explained in Proposition
\ref{PP2D2} below.

Since we are dealing with unbounded sets, we pursue the idea to reduce
computations to certain compact sets and then apply a cutting
scheme. Furthermore, we have to be able to assess the set
$\recc{C}$. Since this is difficult in the general case, we only
consider sets $C$ that are spectrahedra, because a representation of
the recession cone is readily available.

Throughout this section we consider the following semidefinite
programs related to a closed spectrahedron $C = \{x \in \R^n \mid
\A(x) \mgeq 0 \}$ with nonempty interior. For a direction $w \in \R^n
\setminus \{0\}$ consider
\begin{equation}\tag{P$_1$($w$)}\label{P1}
\begin{aligned}
\max \; w\T x \quad & \text{s.t.} \quad & \A(x) \mgeq 0.
\end{aligned}
\end{equation}
Solving \eqref{P1} is equivalent to determining the maximal shifting
of a hyperplane with normal $w$ within $C$. The following result is
well known in the literature, see e.g. \cite[Corollary 14.2.1]{Roc70}.

\begin{proposition}\label{PP1}
  For every $w \in \interior \polar{(\recc{C})}$ an optimal solution
  to \eqref{P1} exists.
\end{proposition}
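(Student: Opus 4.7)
The plan is to reduce the claim to the classical existence criterion for maximizers of a linear functional: the supremum of $w\T x$ over a nonempty closed convex set $C$ is finite and attained whenever $w\T d < 0$ for every nonzero $d \in \recc{C}$, which is precisely the content of the cited \cite[Corollary 14.2.1]{Roc70}. The whole task thus reduces to translating the hypothesis $w \in \interior \polar{(\recc{C})}$ into this strict negativity on the primal recession cone.

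For the translation I would use a one-line perturbation argument. By definition of interior, there exists $\varepsilon > 0$ with $B_\varepsilon(w) \subseteq \polar{(\recc{C})}$. Given any nonzero $d \in \recc{C}$, the perturbed vector $w' := w + \tfrac{\varepsilon}{2}\, d/\norm{d}$ still lies in $B_\varepsilon(w)$, hence in $\polar{(\recc{C})}$. Applying the defining inequality of the polar cone at $w'$ and $d$ yields
\begin{equation*}
w\T d + \frac{\varepsilon}{2}\,\norm{d} = (w')\T d \leq 0,
\end{equation*}
so $w\T d \leq -\tfrac{\varepsilon}{2}\norm{d} < 0$, as needed.

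If one prefers a self-contained closing step rather than invoking the cited corollary as a black box, I would add the standard recession-cone argument: fix any $x_0 \in C$ (which exists since $\interior C \neq \emptyset$) and form the level set $C_\alpha := \{x \in C \mid w\T x \geq w\T x_0\}$. This is a nonempty closed convex set whose recession cone equals $\recc{C} \cap \{x \mid w\T x \geq 0\}$, and the previous paragraph forces this intersection to be $\{0\}$. Hence $C_\alpha$ is compact, so the continuous objective $w\T x$ attains its maximum on $C_\alpha$ and therefore on all of $C$, producing an optimal solution to \eqref{P1}. The only substantive ingredient is the perturbation step that converts topological interiority of the polar cone into a strict inequality on $\recc{C}$, so I do not foresee a serious obstacle.
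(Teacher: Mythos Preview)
Your proposal is correct. The paper does not actually supply a proof of this proposition but simply cites \cite[Corollary 14.2.1]{Roc70} as a reference; your perturbation argument (turning $w \in \interior\polar{(\recc{C})}$ into $w\T d < 0$ for all nonzero $d \in \recc{C}$) followed by either the citation or the level-set compactness argument is exactly the standard route behind that reference, so your write-up is a faithful and correct elaboration of the same approach.
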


The second problem we consider is
\begin{equation}\tag{P$_2$($v,d$)}\label{P2}
\begin{aligned}
\min \; t \quad & \text{s.t.} \quad & \A(x) &\mgeq 0 \\
& & x-v-td &= 0,
\end{aligned}
\end{equation}
where $v \in \R^n$ and $d \in \R^n\setminus\{0\}$. Solving \eqref{P2}
can be described as the task of determining the maximum distance on
can move in direction $d$ starting at point $v$ until the set $C$ is
reached. If this distance is finite and $v \notin C$, then a solution
to \eqref{P2} yields a point on the boundary of $C$, namely one of the
points that are obtained by intersecting the boundary of $C$ with the
affine set $\{v+td \mid t \in \R \}$. The Lagrangian dual problem of
\eqref{P2} is
\begin{equation}\tag{D$_2$($v,d$)}\label{D2}
\begin{aligned}
\max \; -\A(v) \cdot U \quad & \text{s.t.} \quad & A_i \cdot U &= w_i,
\; \forall i \in \{1,\dots,n\} \\
& & d\T w &= 1 & \\
& & U &\mgeq 0. &
\end{aligned}
\end{equation}
Solutions to \eqref{P2} and \eqref{D2} give rise to a supporting
hyperplane of $C$ as described in the next proposition.

\begin{proposition}\label{PP2D2}
Let $v \notin C$ and set $d=c-v$ for some $c \in \interior C$. Then
solutions $(x^*,t^*)$ to \eqref{P2} and $(U^*,w^*)$ to \eqref{D2}
exist. Moreover, $w^{*\mathsf{T}} x \geq w^{*\mathsf{T}}v+t^*$ for all $x \in C$ and
equality holds for $x=x^*$.
\end{proposition}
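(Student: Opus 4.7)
The plan is to establish primal attainment of \eqref{P2}, to verify Slater's condition so that strong semidefinite duality supplies both dual attainment and zero duality gap, and then to read off the supporting hyperplane directly from dual feasibility combined with the strong-duality identity.

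\textbf{Primal attainment.} The feasible set of \eqref{P2} is $\{(v+td,\,t) : t \in S\}$ where $S = \{t \in \R : \A(v+td) \mgeq 0\}$. Since $c = v+d \in \interior C$ gives $\A(c) \mg 0$, the value $t=1$ lies in $S$, so $S$ is a nonempty closed convex subset of $\R$. I would argue that $S$ is bounded below by contradiction: if $S$ extended to $-\infty$, then $-d$ would be a recession direction of $C$, forcing $c - d = v \in C$ and contradicting $v \notin C$. Hence $t^* := \min S$ is attained and $x^* := v + t^*d$ solves \eqref{P2}.

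\textbf{Dual attainment.} Writing the Lagrangian with multipliers $U \mgeq 0$ for the conic constraint and $w \in \R^n$ for the affine equality, the inner minimization in $(x,t)$ forces $A_i \cdot U = w_i$ (from the coefficient of $x_i$) and $d\T w = 1$ (from the coefficient of $t$), reducing the Lagrangian to $-\A(v) \cdot U$; this is precisely \eqref{D2}. Slater's condition holds at $(x,t)=(c,1)$ since $\A(c) \mg 0$, so the conic duality theorem supplies a dual optimizer $(U^*, w^*)$ together with strong duality $t^* = -\A(v) \cdot U^*$.

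\textbf{Supporting hyperplane.} For any $x \in C$, using $\A(x) \mgeq 0$, $U^* \mgeq 0$, and the dual constraints $w_i^* = A_i \cdot U^*$, one has
\begin{equation*}
0 \leq \A(x) \cdot U^* = A_0 \cdot U^* + \sum_i x_i(A_i \cdot U^*) = A_0 \cdot U^* + w^{*\mathsf{T}} x.
\end{equation*}
Strong duality expands to $t^* = -\A(v) \cdot U^* = -A_0 \cdot U^* - w^{*\mathsf{T}} v$, so rearranging gives $w^{*\mathsf{T}} x \geq -A_0 \cdot U^* = w^{*\mathsf{T}} v + t^*$. Equality at $x^* = v + t^* d$ follows immediately from $d\T w^* = 1$, since $w^{*\mathsf{T}} x^* = w^{*\mathsf{T}} v + t^* (d\T w^*) = w^{*\mathsf{T}} v + t^*$.

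The main obstacle is the recession-cone argument in the primal attainment step, as this is the only place the geometric hypothesis $v \notin C$ enters; the remaining steps are routine consequences of SDP strong duality once Slater is verified.
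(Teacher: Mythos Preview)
Your argument is essentially the paper's: Slater at $(c,1)$ yields strong duality and dual attainment, and the hyperplane inequality is then read off from $\A(x)\cdot U^*\ge 0$ combined with $t^*=-\A(v)\cdot U^*$, with equality at $x^*$ from $d\T w^*=1$. Two minor remarks. First, for primal boundedness below the paper argues more directly than your recession-cone step: for $t\le 0$ the point $v+td$ lies on the ray from $c$ through $v$ at or beyond $v$, so convexity of $C$ together with $c\in C$, $v\notin C$ forces $v+td\notin C$; hence $t^*\in(0,1]$. Your recession argument is also correct. Second, the implication $c\in\interior C\Rightarrow\A(c)\succ 0$ is not automatic for an arbitrary LMI representation of $C$; the paper first invokes \cite[Corollary~5]{GR95} to assume without loss of generality that $\interior C=\{x:\A(x)\succ 0\}$, and you should insert this reduction before claiming Slater holds.
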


\begin{proof}
Without loss of generality we can assume that $\interior C = \{x \in
\R^n \mid \A(x) \mg 0\}$, see \cite[Corollary 5]{GR95}. Then $(c,1)$
is strictly feasible for \eqref{P2}, which is the well known Slater's
constraint qualification in convex optimization. Since, $v \notin C$
and by convexity the first constraint is violated whenever $t \leq
0$. Since $C$ is closed, an optimal solution $(x^*,t^*)$ to \eqref{P2}
with $t^* \in [0,1]$ exists. Slater's constraint qualification now
implies strong duality, i.e. an optimal solution $(U^*,w^*)$ to
\eqref{D2} exists and the optimal values conincide. Next, let $x \in
C$ and observe that
\begin{equation*}
  \begin{aligned}
    w^{*\mathsf{T}}x-w^{*\mathsf{T}}v-t^* &=
    \sum_{i=1}^n x_i (A_i \cdot U^*) -w^{*\mathsf{T}}v-t^* \\
    &= \Ab(x) \cdot U^* -\sum_{i=1}^n v_i (A_i \cdot U^*) -t^* \\
    &= \Ab(x) \cdot U^* - \Ab(v) \cdot U^* + \A(v) \cdot U^* \\
    &= \Ab(x) \cdot U^* + A_0 \cdot U^* \\
    &= \A(x) \cdot U^* \geq 0.
  \end{aligned}
\end{equation*}
The third equality holds due to strong duality. Lastly, for $x=x^*$ we
have equality, because $x^* = v+t^*d$ and $w^{*\mathsf{T}}d = 1$.
\end{proof}

We want to describe the functioning of the algorithm geometrically
before we present the details in pseudo code. The method consists of
two phases. In the first phase an initial polyhedron $P_0$, such that
$P_0 \supseteq C$ and $\thaus{\recc{P_0}}{\recc{C}} \leq \delta$, is
constructed as follows: For $w \in \interior \polar{(\recc{C})}$ the
set
\begin{equation}\label{AEBRC} 
M := \{x \in \R^n \mid w\T x = -(1+\delta)\}
\end{equation}
is a compact basis of $\recc{C}$, i.e. $\recc{C} = \cone M$. We use a
cutting scheme to compute a polyhedral $\delta$-approximation
$\ols{M}$ of $M$ with $M \subseteq \interior \ols{M}$. If in
\eqref{AEBRC} we set $\norm{w}=1$, then
\begin{equation}
K := \cone \ols{M}
\end{equation}
is a polyhedral cone with $\thaus{K}{\recc{C}} \leq \delta$. Next, we
need to construct a polyhedron $P_0$ with recession cone $K$ that
contains $C$. To this end we compute a $H$-representation $(R,0)$ of
$K$ and solve (P$_1$($r$)) for every row $r$ of $R$, that is for every
normal of supporting hyperplanes that define $K$. Note, that a
solution always exists, because $r \in \interior \polar{(\recc{C})}$
by construction. For a solution $x_r^*$ to (P$_1$($r$)) the set
\begin{equation}\label{AESH} 
\{x \in \R^n \mid r\T x = r\T x_r^*\}
\end{equation}
is a hyperplane that supports $C$ in $x_r^*$. For the initial
approximation we then set
\begin{equation}\label{EQIA} 
P_0 = \bigcap_r \{x \in \R^n \mid r\T x \leq r\T x_r^*\}.
\end{equation}
Clearly, it holds that $\recc{P_0} = K$ and that $P_0$ has at least one
vertex, because $K$ is pointed.

In the second phase of the algorithm $P_0$ is refined by successively
cutting of vertices until all vertices are within distance of at most
$\varepsilon$ from $C$. This is achieved by iteratively intersecting
$P_0$ with halfspaces that support $C$ in some point of its
boundary. To guarantee finiteness of the algorithm we retreat with
the computations to compact subsets $\ols{P_0}$ of $P_0$ and $\ols{C}$
of $C$ obtained by intersecting the sets with a halfspace, namely
\begin{equation}
  \ols{P_0} = P_0 \cap \{x \in \R^n \mid w\T x \geq \min\{w\T x \mid x
  \in \conv\ext\left(C+K\right)\}- \varepsilon \}
\end{equation}
and
\begin{equation}
  \ols{C} = C \cap \{x \in \R^n \mid w\T x \geq \min\{w\T x \mid x \in
  \conv\ext\left(C+K\right)\}- \varepsilon \},
\end{equation}
where $w$ is the same as in \eqref{AEBRC}. How the above halfspace can
be computed will be discussed in Variant \ref{VAEDA} below. A cutting
scheme is then applied to compute an outer $\varepsilon$-approximation
$\ols{P}$ of $\ols{C}$. Finally, an $\ed$-approximation of $C$ is
obtained as
\begin{equation}
  \ols{P} + K.
\end{equation}
We describe the aforementioned cutting scheme due to \cite{Kam92} that
is used in the computation of an $\ed$-approximation for the special
case of spectrahedral sets in Algorithm \ref{ACS}.

\begin{algorithm}
  \DontPrintSemicolon
  \KwData{Matrix $\A(x)$ representing a compact spectrahedron $C$ with
    nonempty interior, error tolerance $\varepsilon$}
  \KwResult{$V$-representation of a polyhedral
    $\varepsilon$-approximation $P$ of $C$}
  Compute solutions $x_w^*$ of \eqref{P1} for $w \in
  \{-e,e_1,\dots,e_n\}$\;
  $P \isassigned \set{x \in \R^n \mid \forall w \in
    \set{-e,e_1,\dots,e_n}: w\T x \leq w\T x_w^*}$\;
  $c \isassigned \frac{1}{n+1}\sum_w x_w^*$ \tcp*{interior point
    of $C$}
  $\kappa \isassigned +\infty$\;
  \While{$\kappa > \varepsilon$}{
    Compute the set $\vertices P$\;
    \For{every $v \in \vertices P$}{
      $d \isassigned c-v$\;
      Compute solutions $(x_v^*,t_v^*)$ and $(U_v^*,w_v^*)$ to
      \eqref{P2} and \eqref{D2}\;
    }
    $\bar{v} \isassigned \argmax \set{t_v^*\norm{c-v} \mid v \in \vertices P}$\;
    $P \isassigned P \cap \set{x \in \R^n \mid
      w_{\bar{v}}^{*\mathsf{T}}x \geq w_{\bar{v}}^{*\mathsf{T}}\bar{v}
      +t_{\bar{v}}^*}$\;
    $\kappa \isassigned t_{\bar{v}}^*\norm{c-\bar{v}}$\;
  }
  Compute $\vertices P$ and \KwRet{}
  \caption{A cutting scheme algorithm for spectrahedra}\label{ACS}
\end{algorithm}

The vectors $e$ and $e_i$, $i=1,\dots,n$, in line 1 denote the vector
in $\R^n$ with components all equal to one and the $i$-th unit vector,
respectively. Since $C$ is compact, it holds that $\interior
\polar{(\recc{C})} = \R^n$. Therefore, Proposition \ref{PP1} implies
that optimal solutions $x_w^*$ in line 1 always exist. Note, that
$\kappa$ in line 12 is an upper bound on the Hausdorff distance
between $P$ and $C$ due to the following observation. The Hausdorff
distance between $P$ and $C$ is attained in a vertex of $P$, because
$C \subseteq P$. Since the part $x_v^*$ of an optimal solution of
\eqref{P2} is an element of the boundary of $C$, we conclude $\inf_{x
\in C} \norm{x-v} \leq \norm{x_v^*-v} = t_v^*\norm{c-v}$ for every $v
\in \vertices P$. Hence, the algorithm terminates with $\haus{P}{C}
\leq t_{\bar{v}}^*\norm{c-\bar{v}} \leq \varepsilon$. For the special
class of spectrahedral sets the cutting scheme algorithm terminates
after finitely many steps. This is proved in \cite[Theorem
4.38]{Cir19}.

\begin{remark}
  As mentioned at the beginning of this section, Algorithm \ref{ACS}
  falls into the class of cutting scheme algorithms. In \cite{Kam92}
  convergence properties for a similar class of algorithms, called
  Hausdorff schemes, are established. The authors define a Hausdorff
  scheme as a polyhedral approximation algorithm fulfilling the
  condition
  \[ \haus{P^k}{P^{k+1}} \geq \gamma \haus{P^k}{C} \] for a positive
  constant $\gamma$ in every iteration. Here, $P^k$ denotes the
  polyhedral approximation obtained in iteration $k$. They show, that
  for every $\varepsilon > 0$ there exists an index $k_0$, such that for
  all $k \geq k_0$
  \[ \haus{P^k}{C} \leq (1+\varepsilon)\Gamma(C,n)\frac{1}{k^{1/(n-1)}} \]
  holds for a positive constant $\Gamma(C,n)$. Note, that if in step 8 of
  Algorithm \ref{ACS} we were able to choose $d$ such that $\kappa$
  in line 12 was equal to $\haus{P}{C}$, then our algorithm would be a
  Hausdorff scheme with constant $\gamma=1$ and the bound would hold.
\end{remark}

\begin{remark}
  Algorithm \ref{ACS} uses similar techniques as the supporting
  hyperplane method introduced in \cite{Vei67} for the maximization of
  a linear function subject to quasiconvex constraints. The supporting
  hyperplane method also constructs a sequence of polyhedral outer
  approximations of a convex body by successively introducing
  supporting hyperplanes. In order to find the corresponding boundary
  points, the same geometric idea is employed, i.e. moving from
  vertices of the current approximation towards an interior point
  until the boundary is met. However, the algorithms differ in
  multiple aspects. Firstly, in each iteration we choose the vertex with
  the largest distance to the set $C$ with respect to the direction
  $d$, while in \cite{Vei67} the vertex that realizes the smallest
  objective function value is chosen. Secondly, we do not assume $C$
  to have a continuously differentiable boundary. In particular, if
  $\A(x)$ is a diagonal matrix, then $C$ is a polyhedron. Therefore,
  our algorithm can handle a larger class of sets. Finally, the
  supporting hyperplane method approximates the set only in a
  neighbourhood of the optimal solution to the underlying optimization
  problem, while we are interested in an approximation of the whole
  set $C$.
\end{remark}

We are now prepared to present Algorithm \ref{AEDA}, an algorithm for
the computation of $\ed$-approximations of closed and line-free
spectrahedra with nonempty interior.

\begin{algorithm}
  \DontPrintSemicolon
  \KwData{Matrix $\A(x)$ representing an unbounded closed and
    line-free spectrahedron $C$ with nonempty interior, error tolerances
    $\varepsilon$ and $\delta$}
  \KwResult{$V$-representation of an $\ed$-approximation $P$ of
    $C$}
  $w \isassigned \left(-A_1 \cdot I,\dots,-A_n \cdot I\right)\T /
  \norm{\left(-A_1 \cdot I,\dots,-A_n \cdot I\right)\T}$
  \tcp*{interior point of $\polar{(\recc{C})}$}
  $M \isassigned \recc{C} \cap \set{x \in \R^n \mid w\T
    x=-(1+\delta)}$\;
  Compute a $\delta/2$-approximation $\ols{M}$ of $M$ according to
  Algorithm \ref{ACS}\;
  $\ols{M} \isassigned \ols{M}+\set{x \in \R^n \mid \norm{x}_1 \leq
    \frac{\delta}{2}}$\;
  $K \isassigned \cone \vertices \ols{M}$\;
  $\ols{C} \isassigned C \cap \{x \in \R^n \mid w\T x \geq \min\{w\T x
  \mid x \in \conv\ext\left(C+K\right)\} - \varepsilon\}$\;
  Compute an $\varepsilon$-approximation $\ols{P}$ of $\ols{C}$
  according to Algorithm \ref{ACS}\;
  $P \isassigned \ols{P} + K$\;
  \KwRet{a $V$-representation of $P$}
  \caption{An algorithm for $\ed$-approximations of spectrahedra}\label{AEDA}
\end{algorithm}

Steps 6 and 13 in Algorithm \ref{ACS} and 5 and 9 in Algorithm
\ref{AEDA} require the computation of a $V$-representation from an
$H$-representation or vice versa. These problems are known as vertex
enumeration and facet enumeration, respectively, and are difficult
problems on their own. It is beyond the scope of this paper to discuss
these problems in more detail. Therefore, we only point out that there
exist toolboxes that are able to perform these tasks numerically, such
as \texttt{bensolve tools} \cite{LW16, CLW18}. In practice, however,
the computations often become infeasible in dimensions three and
higher when the number of halfspaces defining the polyhedron is
large. It is also known, that vertex enumeration for unbounded
polyhedra is NP-hard, see \cite{KBBEG08}. Thus, since vertex
enumeration has to be performed in every iteration of Algorithm
\ref{ACS} and for the unbounded polyhedron $P$ in step 9 of Algorithm
\ref{AEDA}, one cannot expect the algorithms to be computationally
efficient.

In order to prove that Algorithm \ref{AEDA} works correctly and is
finite, we need the following preliminary result.

\begin{proposition}\label{PEB}
  Let $C \subseteq \R^n$ be a closed convex set and $K \subseteq \R^n$
  be a closed convex cone such that $\recc{C}\setminus\{0\} \subseteq
  \interior K$. Then $\ext\left(C+K\right)$ is bounded.
\end{proposition}
\begin{proof}
  We may assume that $C+K$ is pointed. Otherwise,
  $\ext\left(C+K\right) = \emptyset$ and the statement is
  vacuous. Note that $\ext\left(C+K\right) \subseteq \ext C$ because
  for every $x \in C$ and $d \in K \setminus \{0\}$ it is true that
  \[
    x+d = \frac{1}{2}x + \frac{1}{2}\left(x+2d\right),
  \]
  i.e. $x+d$ is not an extreme point of $C+K$. Now, assume that
  $\ext\left(C+K\right)$ is unbounded. Then $\ext C$ is unbounded as
  well. Let $\{x_k\}_{k \in \mathcal{N}}$ be an unbounded sequence of
  extreme points of $C+K$. Without loss of generality we assume that
  $\{\norm{x_k}\}_{k \in \mathcal{N}}$ is strictly monotonically
  increasing. If this condition is not satisfied, we can pass to a
  suitable subsequence. Define the sequence of radial projections of
  $x_k-x_1$ as
  \[
    \{d_k\}_{k \geq 2} = \left\lbrace\frac{x_k-x_1}{\norm{x_k-x_1}}\right\rbrace_{k
      \geq 2}.
  \]
  Since $d_k \in B_1(0)$ for all $k \geq 2$, it has a convergent
  subsequence. Again, without loss of generality, assume $\{d_k\}_{k
  \geq 2}$ is itself convergent with limit $\bar{d}$. According to
  \cite[Theorem 8.2]{Roc70} it holds $\bar{d} \in
  \recc{\left(C-\set{x_1}\right)} = \recc{C}$. Since $\recc{C}
  \setminus \set{0} \subseteq \interior K$ and $\norm{\bar{d}} = 1$,
  there exists some $k_0 \in \mathcal{N}$ such that $d_k \in K$ for
  all $k \geq k_0$. This implies $x_k-x_1 \in K$ for all $k \geq k_0$
  as $K$ is a cone. Therefore, $x_k \in \{x_1\}+K$ for all $k \geq
  k_0$. However, this contradicts the assumption that $x_k \in
  \ext\left(C+K\right)$ for all $k \in \mathcal{N}$.
\end{proof}

\begin{theorem}
  As inputs for Algorithm \ref{AEDA} let $\varepsilon, \delta > 0$ and
  the spectrahedron defined by the matrix $\A(x)$ be closed convex
  line-free and with nonempty interior. Then Algorithm \ref{AEDA} works
  correctly, i.e. if it terminates it returns an $\ed$-approximation of
  $C$ according to Definition \ref{DEDA}.
\end{theorem}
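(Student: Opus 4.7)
The plan is to verify each of the three conditions in Definition~\ref{DEDA} for the output $P = \ols P + K$ of Algorithm~\ref{AEDA}, disposing of (i) and (ii) quickly and concentrating the effort on~(iii).

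For (i) and (ii), which are largely bookkeeping, I would begin by noting that Algorithm~\ref{ACS} applied to the compact spectrahedron $\ols C$ returns a polytope, so $\ols P$ is compact and $\recc P = \recc(\ols P + K) = K$. Moreover $K$ is pointed because step~4 places $\ols M$ strictly inside the halfspace $\{w\T x < 0\}$, so $P$ is line-free as required by Definition~\ref{DEDA}. For~(ii), I would show that step~4 preserves the outer-approximation property $M \subseteq \ols M$ (where $M := \recc C \cap \{w\T x = -(1+\delta)\}$) and yields $\haus{\ols M}{M} \leq \delta$, since the $\ell^1$-ball of radius $\delta/2$ sits inside the Euclidean ball of the same radius, so the inflation adds at most $\delta/2$ on top of the step-3 $\delta/2$-approximation of $M$; a scaling argument using $\|w\|=1$ then converts this base bound into $\thaus{K}{\recc C}\leq\delta$. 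For~(i), any vertex $v=q+k$ of $\ols P + K$ with $k\neq 0$ lies strictly between $q+(1-t)k$ and $q+(1+t)k$ for small $t\in(0,1)$, so $\vertices P \subseteq \vertices \ols P$; combined with $\haus{\ols P}{\ols C}\leq\varepsilon$ and $\ols C\subseteq C$ this yields $\exc{\vertices P}{C}\leq\varepsilon$.

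For~(iii), $P\supseteq C$, I would first check that $K\supseteq\recc C$ (from the base outer approximation) and that every row $r$ of the $H$-representation $(R,0)$ of $K$ lies in $\interior\polar{(\recc C)}$ (strictness coming from the $\ell^1$-inflation), so Proposition~\ref{PP1} yields the maximizers $x_r^*$. Each halfspace $\{r\T x \leq r\T x_r^*\}$ contains $C$ by construction, hence $C\subseteq P_0 := \bigcap_r\{r\T x \leq r\T x_r^*\}$, a polyhedron with recession cone exactly~$K$. The $-\varepsilon$ slack in $w_0 = \min_r w\T x_r^* - \varepsilon$ guarantees $w\T x_r^* \geq w_0$, so each $x_r^*$ lies in $\ols C \subseteq \ols P$. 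The inclusion $C \subseteq \ols P + K$ is immediate for $c\in C$ with $w\T c \geq w_0$ (then $c \in \ols C \subseteq \ols P$), and the task for the remaining $c$ is to produce a decomposition $c = c' + k$ with $c'\in\ols C$ and $k\in K$.

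The hard step is constructing this decomposition. A natural candidate is $c' = \sum_r \beta_r x_r^*$ with convex weights $\beta_r\geq 0$, $\sum_r \beta_r = 1$, which automatically lies in $\conv\{x_r^*\}\subseteq\ols C$; the residual $k = c - c' = \sum_r \beta_r(c - x_r^*)$ then lies in $K$ precisely when $\tilde r\T c \leq \sum_r \beta_r\,\tilde r\T x_r^*$ for every facet normal $\tilde r$ of $K$. This is a linear feasibility system in $\beta$ that is strictly stronger than what $c\in P_0$ yields directly, and its solvability is the main obstacle; a Farkas-type duality argument exploiting that each $x_r^*$ maximizes $r\T$ on $C$ over all directions in $\polar K$ should deliver the needed weights. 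Once the decomposition is in hand, the chain $C \subseteq \ols C + K \subseteq \ols P + K = P$ closes the proof.
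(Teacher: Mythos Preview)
Your handling of conditions (i) and (ii) is sound and tracks the paper. The genuine gap is in (iii): the decomposition $c' = \sum_r \beta_r x_r^* \in \conv\{x_r^*\}$ that you propose for points $c$ with $w\T c < w_0$ need not exist, so no Farkas argument can deliver the weights. Take $C = \{x \in \R^3 : x_3 \geq x_1^2 + x_2^2\}$ and let $K$ have the four facet normals $(\pm 10, 0, -1)$, $(0, \pm 10, -1)$. Then the four maximizers $x_r^*$ are $(\pm 5,0,25)$ and $(0,\pm 5,25)$, so $\conv\{x_r^*\} = \{|x_1|+|x_2|\leq 5,\ x_3=25\}$, and with $w=-e_3$ one gets $w_0 = -25-\varepsilon$. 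The point $c=(4,4,32)\in C$ satisfies $w\T c < w_0$ for every $\varepsilon<7$, yet $c-c'\in K$ forces $c'_1,c'_2\in[3.3,4.7]$, hence $|c'_1|+|c'_2|\geq 6.6 > 5$: your linear system is infeasible. Geometrically, $\conv\{x_r^*\}$ is only a polytope \emph{inscribed} in the slice of $C$ at height~$25$ and is too small to reach $c$ through the narrow cone~$K$.

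The paper takes a different route for (iii). Rather than aiming at $\conv\{x_r^*\}$, it decomposes through the full section $F = \{x \in C : w\T x = w_0\}$. It passes to $C^+ := \cl(C\setminus\ols C)$ and introduces auxiliary points $\bar x_r := \argmax\{r\T x : x\in C^+\}$; the $-\varepsilon$ slack in $w_0$ ensures $w\T x_r^* > w_0$ for every $r$, which forces each $\bar x_r$ onto the hyperplane $\{w\T x = w_0\}$. From the fact that the polyhedron $\bigcap_r\{r\T x \leq r\T\bar x_r\}$ has recession cone $K$ and all the $\bar x_r$ lie on that single hyperplane, the paper obtains $C^+ \subseteq F + K \subseteq \ols C + K$. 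In the example above $F$ is the full disk of radius $\sqrt{25+\varepsilon}$ rather than the inscribed diamond, and that disk is large enough to admit the required decomposition of $c$. The idea you are missing is to replace the maximizers $x_r^*$ over $C$ by the maximizers $\bar x_r$ over $C^+$, which all land on one common slice of~$C$.
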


\begin{proof}
  Since $C$ is closed and does not contain any lines, its recession
  cone is also closed and pointed. This implies that
  $\polar{(\recc{C})}$ has nonempty interior, see
  e.g. \cite[p. 53]{BV04}. The direction $w$ defined in line 1 is an
  element of $\polar{(\recc{C})}$ according to \ref{EPCS}. Note, that $w
  \neq 0$, because $\recc{C} \neq \set{0}$ and the pointedness of
  $\recc{C}$ implies that the matrices $A_1,\dots,A_n$ are linearly
  independent \cite[Lemma 3.2.9]{Net11}. To see that $w$ is indeed from
  the interior of $\polar{(\recc{C})}$ observe that for every $x \in
  \recc{C}\setminus \{0\}$ it holds, that
  \[ w\T x = -\sum_{i=1}^n x_i (A_i \cdot I) = - \left(\sum_{i=1}^n
      x_iA_i\right) \cdot I = - \Ab(x) \cdot I < 0. \]
  The last inequality holds, because at least one eigenvalue of
  $\Ab(x)$ is positive. The set $M$ defined in line 2 is compact,
  because $w \in \interior \polar{(\recc{C})}$. Note, that $M$ is not
  full-dimensional, however, treating its affine hull as the ambient
  space, $M$ is a valid input for Algorithm \ref{ACS} in line 3. By
  enlarging $\ols{M}$ in line 4 it remains polyhedral as the Minkowski
  sum of polyhedra. The cone $K$ is then polyhedral and it satisfies
  $\recc{C}\setminus \{0\} \subseteq \interior K$ and
  $\thaus{K}{\recc{C}} \leq \delta$. The first assertion is immediate
  from the observation that $\recc{C} = \cone M$ and $M \subseteq
  \interior \ols{M}$. Secondly, it is true that $\norm{x} \geq 1+\delta$
  for every $x$ satisfying $w\T x = -(1+\delta)$. Therefore, $\norm{x}
  \geq 1$ for every $x \in \ols{M}$ due to the construction of the
  set. Assume $\thaus{K}{\recc{C}}$ is attained as $\norm{k-c}$ and let
  $\alpha$ be chosen such that $\alpha k \in \ols{M}$, in particular
  $\alpha \geq 1$. Then we obtain the second claim by the following
  observation:
  \begin{equation*}
    \begin{aligned}
      \thaus{K}{\recc{C}} = \norm{k-c} &\leq \alpha \norm{k-c}\\
      &\leq \inf_{\substack{c \in \recc{C},\\ w\T c=-(1+\delta)}} \norm{\alpha k -
        c}\\
      &\leq \haus{\ols{M}}{M}\\
      &\leq \delta.
    \end{aligned}
  \end{equation*}
  Note that $K$ is pointed because $0 \notin \ols{M}$.  Since
  $\recc{C} \setminus \set{0} \subseteq \interior K$,
  $\ext\left(S+K\right)$ is bounded according to Proposition
  \ref{PEB}. Therefore, its convex hull is bounded as well. Moreover,
  it is nonempty because $C+K$ is closed according to \cite[Corollary
  9.1.1]{Roc70} and line-free due to $K$ being pointed. Thus,
  \cite[Corollary 18.5.2]{Roc70} guarantees the existence of an
  extreme point. Moreover, the set $\conv\ext\left(C+K\right)$ is
  closed, cf. \cite{Hol75}. Together this implies that $\inf\set{w\T
  x}{x \in \conv\ext\left(C+K\right)}$ is finite and attained,
  i.e. line 6 of the algorithm is well-defined. The additional shift
  of $-\varepsilon$ on the right hand side of the halfspace ensures
  that the intersection with $C$ has nonempty interior. Furthermore,
  $\ols{C}$ is compact, see \cite[Theorem 12]{GIMT13}. Hence, it is a
  valid input to Algorithm \ref{ACS} in line 7 and an
  $\varepsilon$-approximation $\ols{P}$ of $\ols{C}$ is computed
  correctly.
  
  Now, we show that $P = \ols{P} + K$ is an $\ed$-approximation of
  $C$. Since $\ols{P}$ is compact, it holds $\recc{P} = K$ and $P$ is
  pointed. We have already demonstrated $\thaus{K}{\recc{C}} \leq
  \delta$. As $\vertices P \subseteq \vertices \ols{P}$, it holds
  \[
    \exc{\vertices P}{C} \leq \exc{\vertices \ols{P}}{C} \leq
    \exc{\vertices \ols{P}}{\ols{C}} \leq \varepsilon.
  \]
  It remains to show $C \subseteq P$. From the fact that
  $\conv\ext\left(C+K\right) \subseteq \ols{C}$ one obtains the
  decomposition
  \[
    C+K = \left(\left(C+K\right) \cap H^+\right) + K,
  \]
  where $H^+$ denotes the halfspace utilized in line 6, see
  \cite[Corollary 2]{GIMT13}. Moreover, it is easy to verify that
  \[
    \left(\left(C+K\right) \cap H^+\right) + K = \ols{C}+K
  \]
  is true because the normal vector $w$ of $H^+$ satisfies $w \in
  K^{\circ}$. We conclude
  \[
    C \subseteq C+K = \ols{C}+K \subseteq \ols{P}+K = P.
  \]
  This completes the proof of correctness.
\end{proof}

\begin{corollary}
  Algorithm \ref{AEDA} terminates after finitely many steps.
\end{corollary}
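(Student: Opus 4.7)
The plan is to inspect Algorithm \ref{AEDA} line by line and reduce its termination to the finiteness of Algorithm \ref{ACS} on compact spectrahedra with nonempty interior, which is already invoked in the paper via \cite[Theorem 4.38]{Cir19}. Concretely, lines 1, 2, 4, 5, 6, 7, 8, 10, 11 consist of a bounded number of operations that are either algebraic manipulations or standard subroutines (vertex/facet enumeration of a polyhedron, solving a semidefinite program with existing optimizer via Propositions \ref{PP1} and \ref{PP2D2}). So the only steps where nontermination could arise are line 3 and line 9, both of which invoke Algorithm \ref{ACS}.

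First I would argue that the call in line 3 is finite. The set $M$ is a spectrahedron (as the intersection of the recession cone with an affine hyperplane), it is compact because $w \in \interior \polar{(\recc{C})}$, and it has nonempty relative interior in its affine hull because $\recc{C}$ has nonempty relative interior along the rays supporting this base. Passing to that affine hull as the ambient space, $M$ is a compact spectrahedron with nonempty interior, so Algorithm \ref{ACS} terminates on $M$ after finitely many iterations by \cite[Theorem 4.38]{Cir19}.

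Next I would handle line 9 analogously. In the correctness proof above it was already shown that $\ols{C}$ is compact and has nonempty interior, being the intersection of the spectrahedron $C$ with a halfspace whose inward normal lies in $\interior \polar{(\recc{C})}$ and containing the convex hull of the points $x_r^*$ in its interior. Since $\ols{C}$ is again a compact spectrahedron with nonempty interior, the call to Algorithm \ref{ACS} in line 9 also terminates in finitely many steps by \cite[Theorem 4.38]{Cir19}.

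Combining these two observations with the trivially finite cost of all other lines yields that Algorithm \ref{AEDA} terminates after finitely many steps. The main obstacle, and really the only one not reducible to a citation, is making sure that the auxiliary sets $M$ and $\ols{C}$ fed into Algorithm \ref{ACS} genuinely satisfy its hypotheses (compactness, nonempty interior in the appropriate ambient space, spectrahedrality) — but each of these properties was already verified in the correctness proof of Algorithm \ref{AEDA}, so here it suffices to point back to that verification.
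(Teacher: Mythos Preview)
Your argument is correct and follows essentially the same route as the paper: reduce termination of Algorithm \ref{AEDA} to the finiteness of the two calls to Algorithm \ref{ACS} (lines 3 and 9), which is guaranteed by \cite[Theorem 4.38]{Cir19} once the inputs $M$ and $\ols{C}$ are known to be compact spectrahedra with nonempty (relative) interior. You are simply more explicit than the paper in checking that these hypotheses hold, but you rightly point back to the correctness proof where they were already established.
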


\begin{proof}
  This is a consequence of the finiteness of Algorithm \ref{ACS},
  see \cite[Theorem 4.38]{Cir19}. Therefore, the executions of
  Algorithm \ref{ACS} in lines 3 and 7 of Algorithm \ref{AEDA}
  terminate after finitely many steps, which implies that Algorithm
  \ref{AEDA} itself is finite.
\end{proof}

The difficulty of Algorithm \ref{AEDA} is the determination of the
halfspace
\begin{equation}\label{eqtn_hs}
  \left\lbrace x \in \R^n \mid w\T x \geq \min\left\lbrace w\T x \mid x
  \in \conv\ext\left( C+K \right)\right\rbrace -
  \varepsilon\right\rbrace
\end{equation}
in line 6. The reason is that, although $\min\set{w\T x}{x \in
\conv\ext\left(C+K\right)}$ is a convex program, a representation of
$\conv\ext\left(C+K\right)$ as a spectrahedron or a more general
description in terms of convex functions is not readily available. In
fact, it is an open question whether $\conv\ext\left(C+K\right)$ is a
spectrahedron in the first place.

So far, the only knowledge we have about $\conv\ext\left(C+K\right)$
is acquired from Proposition \ref{PEB}, which implies its compactness
and the existence of the halfspace \eqref{eqtn_hs}. In order to deal
with this limitation from a computational perspective, we suggest a
modification of Algorithm \ref{AEDA}. Note that in line 6 of the
algorithm it suffices to intersect $C$ with a halfspace $H^+$ such
that their intersection is bounded and the containment
\[
  \conv\ext\left(C+K\right) \subseteq H^+
\]
is satisfied, cf. \cite[Corollary 2]{GIMT13}. The following variant of
Algorithm \ref{AEDA} computes such a halfspace iteratively. It
replaces lines 6--8 of the original algorithm.

\begin{algorithm}[H]
  \DontPrintSemicolon
  Compute the set $R$ of extreme directions of $K^{\circ}$\;
  \For{every $r \in R$}{
    Compute a solution $x^*_r$ of (P$_1$($r$))\;
  }
  $\alpha \isassigned \min\left(\{w\T x^*_r \mid r \in R\} \cup
  \{0\}\right) - \varepsilon$\;
  \Repeat{$C \subseteq P$}{
    $\ols{C} \isassigned C \cap \{x \in \R^n \mid w\T x \geq \alpha\}$\;
    Compute an $\varepsilon$-approximation $\ols{P}$ of $\ols{C}$
  according to Algorithm \ref{ACS}\;
  $\alpha \isassigned 2\alpha$\;
  $P \isassigned \ols{P} + K$\;
  }
  \caption{Variant of lines 6--8 of Algorithm \ref{AEDA}}\label{VAEDA}
\end{algorithm}

After computing the set of extreme directions of $K^{\circ}$ via
vertex enumeration, problem (P$_1$($r$)) is solved for every extreme
direction $r$ of $K^{\circ}$. Solutions $x^*_r$ exist according to
Proposition \ref{PP1}. These solutions give rise to an initial
halfspace $H^+$ with normal being the direction $w$ computed in line 1
of Algorithm \ref{AEDA} and right and side
\[
  \alpha = \min\left(\{w\T x^*_r \mid r \in R\} \cup \{0\}\right)
  -\varepsilon.
\]
A compact subset $\ols{C}$ of $C$ is obtained as the intersection of
$C$ and $H^+$. It has nonempty interior because $x^*_r \in \interior
H^+$ and $\interior C \neq \emptyset$. Moreover, $\alpha < 0$. Now,
Algorithm \ref{ACS} is used to compute a polyhedral outer
approximation $\ols{P}$ of $\ols{C}$ with tolerance $\varepsilon$ and
it is checked whether $P = \ols{P}+K$ is an $\ed$-approximation of $C$
by verifying the containment $C \subseteq P$. If the containment
holds, the algorithm is terminated and $P$ is returned as a
solution. Otherwise, a new compact subset is obtained by doubling the
value of $\alpha$ in the definition of $H^+$, which corresponds to a
shift of $H^+$ in the direction $-w$, and the approximation is
repeated.

The containment $C \subseteq P$ can easily be verified using
semidefinite programming. Suppose $(A,b)$ is an H-representation of
$P$ for $A \in \R^{m \times n}$ and $b \in \R^m$. Let $a^i \in \R^n$
denote the $i$-th row of $A$. Then $C \subseteq P$ if and only if
$\sup_{x \in C}a^{i\mathsf{T}}x \leq b_i$ for every
$i=1,\dots,m$. This follows from the fact that the hyperplane $\{x \in
\R^n \mid a^{i\mathsf{T}}x \leq \sup_{x \in C} a^{i\mathsf{T}}x\}$ is
a supporting hyperplane of $C$ whenever $a^i \neq 0$ and the right
hand side is finite. The value $\sup{x \in C} a^{i\mathsf{T}}x$ is
obtained by solving problem (P$_1$($a^i$)). Thus, the containment in
line~10 can be verified by computing an H-representation of $P$ and
solving $m$ semidefinite programs. Since the set
$\conv\ext\left(C+K\right)$ is bounded according to Proposition
\ref{PEB}, it will eventually be contained in $\ols{C}$ and the
algorithm terminates.

We close this section by illustrating Algorithm \ref{AEDA} with the
following two examples.

\begin{example}\label{exmpl_algrthm}
  Consider the spectrahedron $C \subseteq \R^2$ defined by the
  matrix inequality
  \[
    \begin{pmatrix}
      x_1 & 1 & 0 & 0 \\
      1 & x_2 & 0 & 0 \\
      0 & 0 & 1 & x_1 \\
      0 & 0 & x_1 & x_2
    \end{pmatrix}
    \mgeq 0.
  \]
  It is the intersection of the epigraphs of the functions $x \mapsto
  1/x$, restricted to the positive real line, and $x \mapsto x^2$. We
  use the solver \texttt{SDPT3} \cite{TTT99, TTT03} and the software
  \texttt{bensolve tools} \cite{LW16, CLW18} to solve the semidefinite
  subproblems and perform vertex and facet enumeration,
  respectively. The algorithm is implemented in \texttt{GNU Octave}
  \cite{EBHW21}. Figure \ref{fig_exmpl} shows the polyhedral
  approximations of $C$ at different stages of Algorithm \ref{AEDA} for
  the tolerances $\ed = (0.1, 0.1)$.

  \begin{figure}
    \centering
    \begin{subfigure}[t]{.45\textwidth}
      \centering
      \includegraphics[]{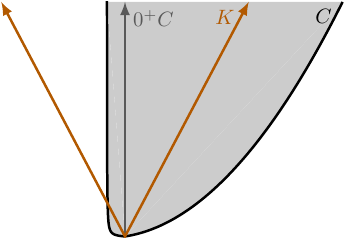}
      \caption{\label{fig_exmpl_a} The approximate recession cone $K$
        as computed in lines 1-5.}
    \end{subfigure}
    \hspace{.5cm}
    \begin{subfigure}[t]{.45\textwidth}
      \centering
      \includegraphics[]{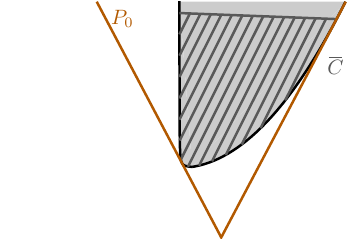}
      \caption{\label{fig_exmpl_b} $P_0$ is the initial approximation
        of $C$ according to \eqref{EQIA}. The grey hatched area is the
      compact subset $\ols{C}$ of $C$ computed in line 6.}
    \end{subfigure}

    \ \\
    \ \\
    
    \begin{subfigure}[t]{.45\textwidth}
      \centering
      \includegraphics[]{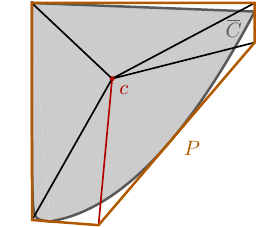}
      \caption{\label{fig_exmpl_c} One iteration of Algorithm
        \ref{ACS} applied to $\ols{C}$ in line 7. The line segments
        connecting vertices of the current approximation $P$ with the
        interior point $c$ correspond to the directions for which the
        subproblems \eqref{P2} are solved. The vertex from which the
        red line segment is emanating yields the largest distance to
        $\ols{C}$.} 
    \end{subfigure}
    \hspace{.5cm}  
    \begin{subfigure}[t]{.45\textwidth}
      \centering
      \includegraphics[]{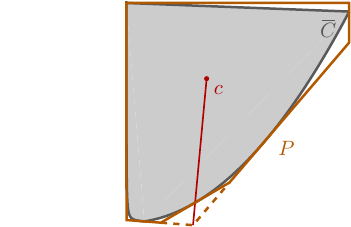}
      \caption{\label{fig_exmpl_d} The updated approximation after a
        vertex of the approximation in Figure \ref{fig_exmpl_c} is cut
      off. The red line segment and the new facet of $P$ intersect the
      boundary of $\ols{C}$ in the same point.}
    \end{subfigure}

    \ \\
    \ \\
    
    \begin{subfigure}[t]{.45\textwidth}
      \centering
      \includegraphics[]{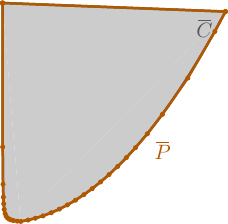}
      \caption{\label{fig_exmpl_e} $\ols{P}$ is the computed
        $\varepsilon$-approximation of $\ols{C}$ after Algorithm
        \ref{ACS} terminates in line 7. The orange dots are the
        vertices of $\ols{C}$.}
    \end{subfigure}
    \hspace{.5cm}    
    \begin{subfigure}[t]{.45\textwidth}
      \centering
      \includegraphics[]{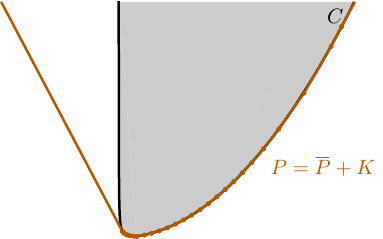}
      \caption{\label{fig_exmpl_f} The $\ed$-approximation $P$ of $C$
        that is computed by the algorithm.}
    \end{subfigure}
    \decoRule
    \caption{\label{fig_exmpl} The computed polyhedra at different
      steps of Algorithm \ref{AEDA} for the error tolerances $\ed =
      (0.1,0.1)$. Figures \ref{fig_exmpl_a} and
      \ref{fig_exmpl_c}-\ref{fig_exmpl_f} are scaled vertically by a
      factor of 0.077, Figure \ref{fig_exmpl_b} by a factor of 0.054
      for visibility.}
  \end{figure}

  \begin{table}
  \centering
  \caption{\label{tbl_exmpl} Computational results for Example
    \ref{exmpl_algrthm} and different values of $\varepsilon$ and
    $\delta$. Every cell shows the number of solved semidefinite
    subproblems, the number of vertices of the polyhedral approximation,
    as well as the elapsed CPU time.}
  \begin{tabular}{rccc}
    \toprule
    \diagbox{\boldmath{$\varepsilon$}}{\boldmath{$\delta$}} & 0.1 & 0.15 & 0.2 \\
    \midrule
    \addlinespace[1ex]
    0.1 & \makecell{603 \\ 26 \\ 230.98} & \makecell{359 \\ 20 \\ 139.92} & \makecell{261 \\ 16 \\ 100.22} \\
    \addlinespace[1ex]
    0.3 & \makecell{239 \\ 15 \\ 93.57} & \makecell{161 \\ 12 \\ 62.79} & \makecell{99 \\ 9 \\ 39.15} \\
    \addlinespace[1ex]
    0.5 & \makecell{198 \\ 14 \\ 76.38} & \makecell{99 \\ 9 \\ 38.76} & \makecell{99 \\ 9 \\ 39.85} \\
    \bottomrule
  \end{tabular}
\end{table}

  Computational results for different values of $\varepsilon$ and
  $\delta$ are presented in Table \ref{tbl_exmpl}. It can be seen that
  the number of subproblems that have to be solved is larger than the
  number of vertices the polyhedral approximation has. The reason is
  that one instance of \eqref{P2} is solved for every vertex of the
  current approximation in every iteration of Algorithm \ref{ACS}, but
  only one of these vertices is cut off. Moreover, the number of
  solved subproblems grows quickly as $\varepsilon$ decreases, because
  more iterations of Algorithm \ref{ACS} are needed to reach the
  desired accuracy and the number of solved subproblems grows with
  every iteration. Since the recession cone of $C$ is just a ray and
  easy to approximate, most of the computational effort is put into
  approximating $\ols{C}$ in line 7. However, for fixed $\varepsilon$
  and decreasing $\delta$ the number of solved subproblems grows. This
  is due to the fact that $\ols{C}$ depends on the approximate
  recession cone $K$. As $\delta$ decreases the rays generating $K$
  will be closer to each other with respect to the truncated Hausdorff
  distance. Therefore, the set $\ols{C}$ will have a larger area and
  it takes more iterations to compute an $\varepsilon$-approximation
  of it. Note, that for $\ed$ equal to $(0.3,0.2)$, $(0.5,0.15)$ or
  $(0.5,0.2)$ the same number of subproblems are solved and the
  approximations have the same number of vertices. For the tolerances
  $(0.3,0.2)$ and $(0.5,0.2)$ the values are identical, because during
  the approximation of $\ols{C}$ the approximation error in Algorithm
  \ref{ACS} changes from a value larger than 0.5 to a value smaller
  than 0.3 in one iteration. Therefore, the resulting
  $\ed$-approximations are identical. For $\ed = (0.5,0.15)$ the
  approximation is different and it is a coincidence that the values
  coincide.
  
\end{example}

\begin{example}\label{exmpl_psd_cone}
  Algorithm 2 can also be used to compute polyhedral approximations of
  closed and pointed convex cones. Consider for example the positive
  semidefinite cone of $2 \times 2$ matrices
  \[
    S = \left\lbrace x \in \R^3 \middle\vert \begin{pmatrix} x_1 & x_3 \\ x_3 &
        x_2 \end{pmatrix} \mgeq 0 \right\rbrace.
  \]
  It is a closed and pointed convex cone with nonempty interior. Thus,
  we can apply Algorithm \ref{AEDA} to it. Since $S$ is a cone, its only
  vertex is the origin and we can terminate the algorithm after $K$ has
  been computed in line 5. Then $K$ is a polyhedral cone and it holds
  $\thaus{K}{S} \leq \delta$. Figure \ref{fig_psd_cone} shows a
  polyhedral approximation of $S$ with 20 extreme rays and
  $\thaus{K}{S} \leq 0.1$.

  \begin{figure}
    \centering
    \includegraphics[scale=0.6]{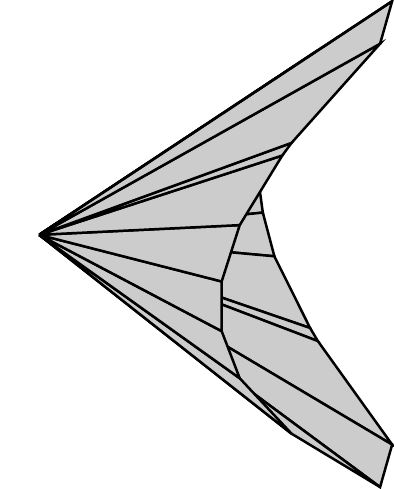}
    \decoRule
    \caption{\label{fig_psd_cone} A polyhedral approximation of the
      cone of $2 \times 2$ positive semidefinite matrices obtained by
      Algorithm \ref{AEDA} for $\delta= 0.1$, see Example
      \ref{exmpl_psd_cone}.}
  \end{figure}
  
\end{example}

\section{Conclusion}
We have introduced the notion of $\ed$-approximations for the
polyhedral approximation of unbounded convex sets. Since polyhedral
approximation in the Hausdorff distance can only be achieved for
unbounded sets under restrictive assumptions, $\ed$-approximations are
of particular interest, because they allow treatment of a larger class
of sets. An important observation is that the recession cones of the
involved sets must play a crucial role in a meaningful concept of
approximation for unbounded sets. We have shown that
$\ed$-approximations define a suitable notion of approximation in the
sense that a sequence of such approximations convergences and that
$\ed$-approximations generalize the polyhedral approximation of
compact sets with respect to the Hausdorff distance. Finally, we have
presented an algorithm that allows for the computation of
$\ed$-approximations of spectrahedra and have shown that the algorithm
is finite.

\section*{Data Availability Statement}
Data sharing is not applicable to this article as no datasets were
generated or analysed during the current study.

\section*{Acknowledgement}
The author thanks the two anonymous reviewers for their insightful
comments that enhanced the quality of this paper.
 
 \bibliographystyle{bib/tfs}
 \bibliography{bib/references}

\end{document}